\newtheorem{thm}{Theorem}[section]
\newtheorem{lem}[thm]{Lemma}
\newtheorem{cor}[thm]{Corollary}
\theoremstyle{definition} 
\newtheorem{defi}[thm]{Definition}
\newenvironment{ex}
{\pushQED{\qed}\exx}
{\popQED\endexx}
\newcommand{\st}{\operatorname{st}}
\title{Gorenstein rings generated by strongly stable sets of quadratic monomials}
\author[1]{Ralf Fr\"oberg}
\author[2]{Lisa Nicklasson}
\affil[1,2]{Stockholm University}
\affil[2]{Universit\`a di Genova}
\affil[2]{\small \tt \Letter \  nicklasson@dima.unige.it}
\date{}
\begin{document}
\maketitle

\begin{abstract}
	\noindent We characterize all Gorenstein rings generated by strongly stable sets of monomials of degree two. We compute their Hilbert series in several cases, which also provides an answer to a question by Migliore and Nagel \cite{mi-na}.
\end{abstract}

\section{Introduction}
Strongly stable sets of monomials are an important tool in commutative algebra, and provides a link to combinatorics. One reason for studying strongly stable sets of monomials is the following.
When studying graded ideals of $K[x_1,\ldots,x_n]$, a much used technique is to make a general 
change of coordinates and then determine the initial ideal of the transformed ideal. This operation preserves many algebraic properties of the original ideal. It is well known \cite{ga}, that such initial ideals are Borel fixed. 
In characteristic zero being Borel fixed is the same as being strongly stable. (In positive characteristic strongly stable
implies Borel fixed only.) 

In the context of Hilbert schemes, it is known that each component and each intersection 
of components contains at least one point corresponding to a scheme defined by a Borel-fixed ideal,
and these ideals  can be used to understand its local structure, see \cite{no-sp}, \cite{le}.

Strongly stable sets also play an important role in the algebraic theory of shifting, see 
\cite[Chapter 11]{he-hi}.

In \cite{bo-co} Boij and Conca study subrings $K[f_1,\ldots,f_r]$ of $K[x_1,\ldots,x_n]$, with $f_i$ of degree $d$, and are interested in, given $n,r,d$, how to choose the $f_i$'s so that the subring they generate has minimal Hilbert series. They show that the 
$f_i$'s should constitute a strongly stable set of monomials, but it is not clear which strongly stable sets that occur. The second author of this paper made a thorough investigation of the case $d=2$ \cite{ni}. A subring generated by a strongly stable set of quadratic monomials can be realized as a quotient by a polynomial ring and a determinantal ideal. This connection is also studied in \cite{co-na-pe-yu}. A nice feature of these rings is the combinatorial interpretation of their Hilbert series. 

Rings generated by strongly stable sets of monomials is an interesting topic in itself, and was studied by De Negri in \cite{dene}. In this paper we continue the study of subrings generated by strongly stable sets of quadratic monomials, asking when such a ring is Gorenstein. Our result is a complete characterization of which strongly stable sets in degree two that give Gorenstein rings. We also provide explicit expressions of their Hilbert series in several cases. We find that Gorenstein rings are rather ubiquitous in our situation. Among other things we find lots of more Hilbert functions of Gorenstein ideals generated by quadrics than the ones given in \cite{mi-na}.

\section{Preliminaries}
Let $K$ be a field, and let $R = K[x_1,\ldots,x_n]$ be the standard graded polynomial ring in $n$
 variables. Let $R_d$ denote the $K$-space of homogeneous polynomials of degree $d$ in $R$. 
 For a linearly independent subset $W\subseteq R_d$, let $K[W]\subseteq R$ be the subring of $R$
 generated by the elements in $W$. Define the Hilbert function of such an algebra $K[W]$ as 
 HF$(K[W ], i) = \dim_K({\rm span} W^i)$, and the Hilbert series of $K[W]$ to be
 $\sum_{i=0}^\infty {\rm HF}(K[W],i)t^i$.
  
 \begin{defi} A set $W$ of monomials in $R_d$ is called strongly stable if $m \in W$ and $x_i|m$
  implies $(x_j/x_i)m \in W$ for all $j < i$.
 \end{defi}
 We use the notation st$(m_1,\ldots, m_s)$ for the smallest strongly stable set containing the monomials $m_1,\ldots,m_s$, and we say that $m_1,\ldots,m_s$ are strongly stable generators of this set. We are interested in characterizing Gorenstein rings $K[W]$ in the case when $W$ is a strongly
stable set of quadratic monomials.
Strongly stable sets of quadratic monomials can be illustrated by a shifted Ferrers diagram, as in Figure \ref{fig:firstex}. The diagram is defined as follows. The box in row $i$ and column $j$ represents the monomial $x_ix_j$. Since $x_ix_j = x_jx_i$ we only need to consider boxes on and above the diagonal in the diagram. That the set of monomials is strongly stable means precisely that if the $x_ix_j$-box is included in the diagram, so is everything above and to the left of it. 

\begin{figure}[ht]
	\centering 
	\includegraphics[scale=1.43]{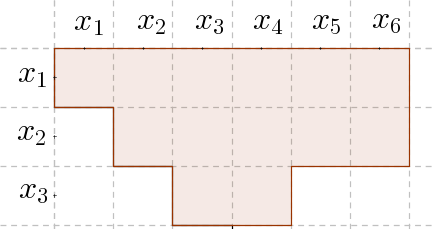}
	\caption{$\st(x_3x_4,x_2x_6)$}
	\label{fig:firstex}
\end{figure}

In searching for Gorenstein rings we will use the following theorem.

\begin{thm}\cite[Theorem 4.4]{{st}}
	If a graded Cohen-Macaulay domain $A$ has Hilbert series $\sum_{i=0}^kh_it^i/(1-t)^d$, $d=\dim A$,
	 and the numerator is
	symmetric, i.e. $h_i=h_{k-i}$, then $A$ is Gorenstein.
\end{thm}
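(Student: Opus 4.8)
The plan is to argue through the graded canonical module, where the domain hypothesis and the symmetry of the $h$-vector reinforce one another. As a Cohen--Macaulay finitely generated graded $K$-algebra, $A$ possesses a graded canonical module $\omega_A$, a maximal Cohen--Macaulay module of dimension $d$, and graded local duality supplies the functional equation $H_{\omega_A}(t)=(-1)^d H_A(1/t)$, where $H_A$ and $H_{\omega_A}$ denote Hilbert series. First I would substitute $H_A(t)=\sum_{i=0}^{k}h_it^i/(1-t)^d$ into this equation and simplify, using $(1-1/t)^d=(-1)^d(1-t)^d/t^d$ and the symmetry $h_i=h_{k-i}$, which makes $t^k h(1/t)=h(t)$ for the numerator polynomial $h(t)=\sum_{i=0}^k h_it^i$. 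This yields the clean identity
\[
H_{\omega_A}(t)=t^{\,d-k}\,H_A(t),
\]
so that $\omega_A$ has the same Hilbert function as $A(k-d)$.

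Next I would bring in the hypothesis that $A$ is a domain. Then $\omega_A$ is torsion-free of rank one (being maximal Cohen--Macaulay over the domain $A$, it is unmixed, with $(0)$ as its only associated prime). Hence, as a graded module, $\omega_A$ is isomorphic to a graded $A$-submodule of the homogeneous localization $Q$ of $A$ at its nonzero homogeneous elements, which is a graded field: $\omega_A\otimes_A Q$ is graded-free of rank one, and composing $\omega_A\hookrightarrow\omega_A\otimes_A Q$ with a twisting isomorphism onto $Q$ realizes $\omega_A$ inside $Q$. Multiplying this submodule by a suitable homogeneous unit of $Q$, I may normalize so that the resulting graded $A$-submodule $N\subseteq Q$ has exactly the Hilbert function of $A$ — this is precisely the content of the displayed identity — so that $N_i=0$ for $i<0$ while $\dim_K N_0=1$. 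Since $N_0$ is a one-dimensional $K$-subspace of the field $Q_0$, a final rescaling by an element of $Q_0^{\times}$ arranges $1\in N$.

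Once $1\in N$, the inclusion $A=A\cdot 1\subseteq N$ is automatic, as $N$ is an $A$-submodule of $Q$; and since $A$ and $N$ have equal finite Hilbert functions, $A_j=N_j$ in every degree, i.e.\ $A=N$. Tracing back the identifications, $\omega_A\cong A(k-d)$ is free of rank one, which by the standard characterization of the Gorenstein property among Cohen--Macaulay rings forces $A$ to be Gorenstein.

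The Hilbert-series bookkeeping and the fact that $\omega_A$ is torsion-free of rank one are routine; the delicate point is the matching of degree shifts that makes the normalization ``$1\in N$'' legitimate, and this is exactly where symmetry of the $h$-vector enters — without it $H_{\omega_A}$ is not a shift of $H_A$, the Hilbert functions of $N$ and $A$ cannot be reconciled, and the inclusion $A\subseteq N$ (hence the conclusion) fails. All the commutative algebra invoked here — existence of the graded canonical module, graded local duality, the structure of maximal Cohen--Macaulay modules over a domain, and ``Gorenstein $\iff$ canonical module free of rank one'' — is standard.
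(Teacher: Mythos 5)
This theorem is not proved in the paper at all --- it is quoted from Stanley \cite{st} --- and your canonical-module argument is precisely the standard proof of that result (essentially Stanley's own): symmetry of the $h$-vector turns the local-duality functional equation into $H_{\omega_A}(t)=t^{\,d-k}H_A(t)$, and torsion-freeness plus rank one over the domain let you realize $\omega_A$ as a shifted copy of $A$ inside the graded fraction field, hence free, hence Gorenstein. The argument is correct as written; the normalization you flag as delicate does go through, since the first rescaling only needs a homogeneous unit of $Q$ in a degree where $Q$ is nonzero and the last one a unit of the field $Q_0$.
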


We will refer to $(h_0, \ldots, h_k)$ as the $h$-vector of $K[W]$. Our algebras can be realized as algebras defined by a symmetric ladder determinantal ideal. For a proof of this fact see e.\,g.\ \cite[Theorem 4.2]{co-na-pe-yu}. Conca, \cite{co}, studies the Hilbert series of such algebras. The key idea is to take an initial ideal of the ladder determinantal ideal, which then defines a Stanley-Reisner ring of a shellable simplicial complex. For Stanley-Reisner rings of shellable simplicial complexes there is a combinatorial interpretation of the $h$-vector known as the McMullen-Walkup formula. This also proves that the rings are Cohen-Macaulay. The proof by Conca applies directly also in our case, and the result is the following. 

Define an NE-path to be a lattice path in the diagram that can only go up or right (north or east). We refer to an upward step as an N-step (north), and a step to the right as an E-step (east). An NE-path is called maximal if it is of maximal length, which implies that it starts in $x_i^2$ (on the diagonal) for some $i$, and goes to $x_1x_n$ (the upper right corner). The dimension of $K[W]$
is $n$, and $h_i$ is the number of maximal paths with $i$ maximal N-parts. An N-part of a 
path is a subsequence $x_ax_b-x_{a-1}x_b-\cdots-x_{a-m}x_b$ of N-steps, and it is maximal if it can't be extended to a longer N-part of the path. The description in \cite{co} is slightly different but equivalent. 
Instead of maximal N-parts, $h_i$ is counted by the number of \emph{corners}. A corner, in this context, is the same as the starting point of a N-part. 

\begin{ex}\label{ex:h-vec}
If $W$ is as in Figure \ref{fig:firstex}, $h_0=1$, since $x_1^2-x_1x_2-x_1x_3-x_1x_4-x_1x_5-x_1x_6$ is
the only maximal path without N-parts. 
We have $h_1=7$ corresponding to the paths
\begin{align*}
	&x_2^2-x_1x_2-x_1x_3-x_1x_4-x_1x_5-x_1x_6, \ x_2^2-x_2x_3-x_1x_3-x_1x_4-x_1x_5-x_1x_6, \\
	&x_2^2-x_2x_3-x_2x_4-x_1x_4-x_1x_5-x_1x_6, \ x_2^2-x_2x_3-x_2x_4-x_2x_5-x_1x_5-x_1x_6, \\ 
	&x_2^2-x_2x_3-x_2x_4-x_2x_5-x_2x_6-x_1x_6, \ x_3^2-x_2x_3-x_1x_3-x_1x_4-x_1x_5-x_1x_6, \\
	&\mbox{and} \  x_3^2-x_3x_4-x_2x_4-x_1x_4-x_1x_5-x_1x_6. 
\end{align*}
We have $h_2=5$ corresponding to the paths
\begin{align*}
	&x_3^2-x_2x_3-x_2x_4-x_1x_4-x_1x_5-x_1x_6, \ x_3^2-x_2x_3-x_2x_4-x_2x_5-x_1x_5-x_1x_6, \\
	&x_3^2-x_2x_3-x_2x_4-x_2x_5-x_2x_6-x_1x_6, \ x_3^2-x_3x_4-x_2x_4-x_2x_5-x_1x_5-x_1x_6, \\
	&\mbox{and} \  x_3^2-x_3x_4-x_2x_4-x_2x_5-x_2x_6-x_1x_6. 
\end{align*}
There is no path with more than two maximal N-parts. Thus the Hilbert series is $(1+7t+5t^2)/(1-t)^6$.
\end{ex}

For a diagram of a strongly stable set $V$, we define a partial ordering by 
$(a_1,b_1) \le (a_2,b_2)$ if $a_1 \le a_2$ and $b_1 \le b_2$. The corners of a NE-path in the diagram constitutes an antichain (i.e. sets of points with no relation between any two) in this partial ordering. Hence $h_i$ can be determined as the number of antichains of size $i$ in the diagram of $V$ with the top row deleted. 
We state this fact as a lemma. 

\begin{lem}\label{lem:antichains}
	Let $W$ be a strongly stable set of quadratic monomials, and let $(h_1, \ldots, h_k)$ be the $h$-vector of $K[W]$. Then $h_i$ equals the number of antichains of size $i$
	 of the diagram 
	 of $W$ with the top row removed. 
\end{lem}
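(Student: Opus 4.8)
The plan is to produce an explicit bijection between the maximal NE-paths in the diagram of $W$ having exactly $i$ corners and the antichains of size $i$ in the poset obtained from the diagram by deleting the top row; since, as recalled above, $h_i$ counts the former, the lemma follows by counting. Throughout I use that along an NE-path the row index is non-increasing and the column index is non-decreasing.

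First, the easy direction: send a maximal path $P$ to its set $C(P)$ of corners, clearly of size $i$. If $c,c'$ are distinct corners with $c$ visited first, then $\mathrm{row}(c)\ge\mathrm{row}(c')$ and $\mathrm{col}(c)\le\mathrm{col}(c')$; equality of rows is impossible because the N-part starting at $c$ leaves its row for good, and equality of columns is impossible because after the E-step following that N-part the column is never revisited, so $\mathrm{row}(c)>\mathrm{row}(c')$, $\mathrm{col}(c)<\mathrm{col}(c')$ and the two are incomparable. Hence $C(P)$ is an antichain, and since each corner is the source of an N-step it lies in a row $\ge 2$, i.e.\ off the top row.

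The substance is the inverse. Given an antichain $A=\{(a_1,b_1),\dots,(a_i,b_i)\}$ with all $a_j\ge 2$, the antichain condition lets me relabel so that $b_1<\dots<b_i$, which forces $a_1>\dots>a_i$. Put $a_{i+1}:=1$ and let $P_A$ be the path starting at the diagonal box $x_{a_1}^2$, running east to $(a_1,b_1)$, then for $j=1,\dots,i$ running north from $(a_j,b_j)$ to $(a_{j+1},b_j)$ and, for $j<i$, east from $(a_{j+1},b_j)$ to $(a_{j+1},b_{j+1})$, and finally east from $(1,b_i)$ to $x_1x_n$. I would check that $P_A$ stays in the diagram: each box it meets has the form $x_{a_j}x_c$ with $c\le b_j$, or $x_r x_{b_j}$ with $r\le a_j$, or $x_1x_c$ with $c\le n$, and each of these is in $W$ by strong stability applied to a box already known to lie in $W$ (in particular $x_{a_1}^2\in W$, and $x_1x_n\in W$ by definition of the diagram). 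Also every north run has length $a_j-a_{j+1}\ge 1$; here the hypothesis $a_i\ge 2$ is used so that the last run, to row $1$, is genuine. Consequently the corners of $P_A$ are exactly the $(a_j,b_j)$, so $C(P_A)=A$.

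It remains to see the two maps are mutually inverse. One direction, $C(P_A)=A$, is built in. For the other, reading a maximal path $P$ from its diagonal start: the stretch before its first corner can only be a (possibly empty) run of east steps, since an initial north step would make the starting box itself a corner; each maximal N-part must stop precisely at the row of the next corner and each E-part precisely at its column; and the last N-part must climb to row $1$ because $P$ terminates at $x_1x_n$. Thus $P$ is forced to equal $P_{C(P)}$, the maps are inverse bijections, and $h_i$ equals the number of antichains of size $i$ in the diagram of $W$ with the top row removed. I expect the one place needing genuine care to be the inverse construction together with the in-diagram verification, but the latter is just a repeated application of the strong-stability hypothesis and the rest is bookkeeping.
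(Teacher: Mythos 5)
Your proof is correct and takes exactly the route the paper intends (the paper only sketches it in the two sentences preceding the lemma): maximal NE-paths with $i$ maximal N-parts correspond to their sets of corners, which are antichains off the top row, and conversely every such antichain arises from a unique maximal path. Your explicit construction of the inverse path and the strong-stability check that it stays inside the diagram supply precisely the details the paper leaves implicit.
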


The antichains of length 1 always correspond to the points in the diagram with the top row
deleted. So, in Example \ref{ex:h-vec} the antichains of length 1 are in order 
$(2,2)$, $(2,3)$, $(2,4)$, $(2,5)$, $(2,6)$, $(3,3)$, and
$(3,4)$, and the antichains of length 2 are $\{(3,3),(2,4)\}$, $\{(3,3),(2,5)\}$, $\{(3,3),(2,6)\}$, $\{(3,4),(2,5)\}$, and $\{(3,4),(2,6)\}$.

\section{Gorenstein rings}
For a strongly stable set $W \subset R$ we may assume, when searching for Gorenstein rings,
that $x_1x_n$ is not the only monomial in the last column. Indeed, it follows from Lemma \ref{lem:antichains} that we 
may add or remove boxes as we like in the first row of the diagram (as long as the diagram still 
corresponds to a strongly stable set) without affecting the $h$-vector. The number of boxes in the 
first row determines the dimension of $K[W]$. As it is natural to work with lowest possible dimension, 
we will assume from now on that the first row has as few boxes as possible. This means that the last columns has at least two boxes. Or in algebraic terms, we are assuming that $x_2x_n \in W$.  We
say that $K[W]$ has no free variable.  

Let $V_{2k}=\st(x_{k+1}^2,x_kx_{k+2},x_{k-1}x_{k+3},\ldots,x_2x_{2k})$.

\begin{thm}\label{thm:V1} The ring $K[V_{2k}]$ is Gorenstein with Hilbert series $\sum_{i=0}^k \binom{k}{i}^2 t^i/(1-t)^{2k}$.
\end{thm}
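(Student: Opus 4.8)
The plan is to compute the Hilbert series of $K[V_{2k}]$ explicitly and deduce the Gorenstein property from the criterion recalled above. Two of the three hypotheses of that criterion are immediate: $K[V_{2k}]$ is a domain, being a subring of a polynomial ring, and it is Cohen--Macaulay by the shellability argument of Conca recalled just before Lemma~\ref{lem:antichains}. Its Krull dimension is $2k$, since the variables occurring in $V_{2k}$ are exactly $x_1,\dots,x_{2k}$ (equivalently, the first row of the diagram has $2k$ boxes). Thus the theorem reduces to showing that the $h$-vector of $K[V_{2k}]$ is $\bigl(\binom{k}{0}^2,\binom{k}{1}^2,\dots,\binom{k}{k}^2\bigr)$: this sequence is symmetric, so the criterion then yields Gorensteinness, and by definition it is the numerator of the asserted Hilbert series.

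To get the $h$-vector I would invoke Lemma~\ref{lem:antichains}: $h_i$ is the number of size-$i$ antichains of the diagram of $V_{2k}$ with its top row removed, so the first task is to identify this poset. From the definition of $\st$, among the boxes $x_ax_b$ with $a\le b$ those in the diagram of $V_{2k}$ are exactly the ones with $a+b\le 2k+2$; deleting the top row leaves rows $a=2,\dots,k+1$, with row $a$ running over the columns $a,\dots,2k+2-a$. This region is symmetric about the column $b=k+1$, and the change of coordinates $r=a-1$, $c=b-(k+1)$ turns it into the ``tent'' poset
\[
P_k=\{(r,c)\in\Z^2:\ 1\le r\le k,\ |c|\le k-r\},
\]
ordered coordinatewise, with $|P_k|=k^2$.

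The heart of the matter is then the identity
\[
\#\{\text{antichains of size }i\text{ in }P_k\}=\binom{k}{i}^2 .
\]
An antichain of size $i$ in $P_k$ is a list $(r_1,c_1),\dots,(r_i,c_i)$ with $r_1<\dots<r_i$ in $[k]$, $c_1>\dots>c_i$, and $|c_j|\le k-r_j$; the row-set $\{r_1,\dots,r_i\}$ contributes one factor $\binom{k}{i}$, and the content is that the admissible column-sequences supply exactly one more. I see three routes to the identity. (1) A size-preserving bijection between the antichains of $P_k$ and those of the grid poset $[k]\times[k]$, where an antichain of size $i$ is an unconstrained pair of $i$-subsets of $[k]$ and the count $\binom{k}{i}^2$ is transparent. (2) A direct evaluation of the generating function $\sum_i\#\{\text{antichains of size }i\text{ in }P_k\}\,t^i$, for instance by a transfer-matrix sweep over the columns of $P_k$: at column $c$ the admissible rows are $1,\dots,k-|c|$, and an antichain selects a strictly increasing sequence of rows at a strictly decreasing sequence of columns. (3) Recognizing $K[V_{2k}]$, through its presentation as a symmetric ladder determinantal ring (cf.\ \cite[Theorem~4.2]{co-na-pe-yu} and \cite{co}), as a general hyperplane section of the Segre product $\mathbb{P}^k\times\mathbb{P}^k$; the latter is arithmetically Gorenstein of dimension $2k+1$ with $h$-polynomial $\sum_j\binom{k}{j}^2t^j$, so dividing by a linear nonzerodivisor yields a Gorenstein ring of dimension $2k$ with the same $h$-polynomial, which would settle both assertions at once.

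The main obstacle is precisely this identity. The difficulty is structural: $P_k$ is \emph{not} isomorphic to $[k]\times[k]$ (it has $k$ maximal elements, while $[k]\times[k]$ has one), so a bijection as in route (1) cannot merely match rows to rows and columns to columns; route (2) has to cope with the ``moving'' bound $|c_j|\le k-r_j$; and route (3), although cleanest, requires the identification of $K[V_{2k}]$ with the Segre hyperplane section to be made precise.
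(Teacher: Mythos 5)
Your reduction is exactly the paper's: identify the diagram of $V_{2k}$ with its top row deleted as a staircase/triangle poset on $1+3+\cdots+(2k-1)=k^2$ points, count its antichains via Lemma~\ref{lem:antichains}, and conclude Gorensteinness from the symmetry of $\sum_i\binom{k}{i}^2t^i$ together with Stanley's criterion (the domain, Cohen--Macaulay, and dimension checks you make explicit are left implicit in the paper, having been set up in Section~2). The one step you correctly single out as ``the heart of the matter'' --- that this triangular poset has exactly $\binom{k}{i}^2$ antichains of size $i$ --- is also the only step in the paper's proof, and the paper does not prove it either: it cites Proctor \cite[Theorem 1]{pr}, Stembridge \cite[Corollary 2.4]{ste}, and Exercise 3.47(f) of \cite{st1}. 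So as written your proposal has a genuine gap, since none of your three routes is carried out, but it is a gap that is filled by known literature rather than by a new idea; your instinct that this is the crux, and that the poset is \emph{not} isomorphic to $[k]\times[k]$ even though the antichain counts agree, is exactly right. Of your three routes, (1) and (2) amount to reproving the Proctor--Stembridge result (the equinumerosity of antichains in the staircase and in the square is precisely their theorem, and any bijection must scramble rows and columns); route (3) is the most elegant in principle --- the Segre product $\mathbb{P}^k\times\mathbb{P}^k$ has $(k+1)^2$ generators and $h$-polynomial $\sum_j\binom{k}{j}^2t^j$, while $|V_{2k}|=(k+1)^2-1$ and $\dim K[V_{2k}]=2k$, so the numerology is consistent with a hyperplane section --- but the identification of the symmetric staircase ladder ring with a linear section of the generic determinantal ring is not automatic and is the part you would actually have to prove; I would not rely on it without a precise argument.
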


\begin{proof} The diagram of $V_{2k}$ with the top row deleted is a triangle with $1+3+5+\cdots+2k-1=k^2$ points. There are $\binom{k}{i}^2$ antichains of length $i$, by \cite[Theorem 1]{pr} and
\cite[Corollary 2.4]{ste}. The result also follows from Exercise 3.47(f) in \cite{st1}. We thank 
Richard Stanley for informing us about this. Since $\sum_{i=0}^k \binom{k}{i}^2 t^i$ is symmetric, the ring is Gorenstein.
\end{proof}

As we always have $h_0=1$, a necessary condition for Gorenstein is $h_k=1$.

\begin{lem}\label{lem:necessary}
	Let $W$ be a strongly stable set of quadratic monomials, and suppose that $K[W]$ has no free variable. If $K[W]$ is Gorenstein 
	with Hilbert series $h(t)/(1-t)^n$, $deg(h(t))=k$, then $n=2k$ and $V_{2k} \subseteq W
	\subseteq {\rm st}(x_{2k}^2)$. 
\end{lem}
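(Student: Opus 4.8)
The plan is to reduce the whole statement to the single numerical fact $h_k=1$ and then squeeze all the geometry out of the (assumed unique) maximal antichain. First I would recall that a standard graded Cohen--Macaulay algebra is Gorenstein only if its $h$-vector is symmetric, so here $h_k=h_0=1$. Combined with $\deg h(t)=k$ and Lemma~\ref{lem:antichains}, this says that the poset $P$ obtained from the diagram of $W$ by deleting the top row has width $k$ and a \emph{unique} antichain $A^{\ast}=\{(a_1,b_1),\dots,(a_k,b_k)\}$ of that size, with $2\le a_1<\dots<a_k$ and $b_1>\dots>b_k$. I would also fix the staircase description of $W$: row $a$ of its diagram is $(a,a),(a,a+1),\dots,(a,c_a)$, where $c_1\ge c_2\ge c_3\ge\cdots$ with $c_1=n$, and where the no-free-variable hypothesis forces $c_2=n$ as well.

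The core is a sequence of one-box perturbations of $A^{\ast}$; each admissible perturbation produces a second maximal antichain, contradicting $h_k=1$, and so pins $A^{\ast}$ down completely. I would run them in this order: (i) if $b_k>a_k$, replace $(a_k,b_k)$ by $(a_k,b_k-1)$, so $b_k=a_k$; (ii) if $a_1>2$, replace $(a_1,b_1)$ by $(a_1-1,b_1)$, which lies in $P$ because $c_{a_1-1}\ge c_{a_1}\ge b_1$, so $a_1=2$; (iii) if $b_1<c_2$, replace $(2,b_1)$ by $(2,b_1+1)$, so $b_1=c_2=n$; (iv) if $a_{j+1}\ge a_j+2$ for some $j$, replace $(a_{j+1},b_{j+1})$ by $(a_j+1,b_{j+1})$, which lies in $P$ since $c_{a_j+1}\ge c_{a_{j+1}}\ge b_{j+1}$, so the rows of $A^{\ast}$ are exactly $2,3,\dots,k+1$. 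Finally, with $b_1=n$, $b_k=k+1$ and the $b_j$ strictly decreasing, if $n>2k$ some gap $b_j-b_{j+1}$ is at least $2$, and then replacing $(a_j,b_j)$ by $(a_j,b_j-1)$ gives yet another maximal antichain --- a contradiction. Hence $n=2k$, and the chain of inequalities then forces $b_j=2k+1-j$ for every $j$.

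The two inclusions now fall out. Since $n=c_1=2k$, every variable occurring in $W$ lies among $x_1,\dots,x_{2k}$, i.e.\ $W\subseteq\st(x_{2k}^2)$; and $(j+1,\,2k+1-j)=(a_j,b_j)\in P$ for $j=1,\dots,k$ says $c_a\ge 2k+2-a$ for $a=2,\dots,k+1$, i.e.\ $x_{k+1}^2,x_kx_{k+2},\dots,x_2x_{2k}\in W$, which is precisely $V_{2k}\subseteq W$. The fussiest part is the bookkeeping inside (i)--(iv) and the final gap step: one must check in each case that the perturbed box really lies in $P$ --- this is exactly where the monotonicity $c_2\ge c_3\ge\cdots$ of the staircase profile enters --- and that the new set is still an antichain genuinely different from $A^{\ast}$ (it occupies a row, or a box within a row, that $A^{\ast}$ misses). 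It is perhaps worth noting that only the value $h_k=1$ of the symmetry is used in this lemma; the relations $h_i=h_{k-i}$ for $0<i<k$ play no role here.
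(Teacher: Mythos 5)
Your proof is correct, and it reaches the conclusion by a different mechanism than the paper. The paper works directly with NE-paths: the existence of a path with $k$ maximal N-parts forces it to start at some $x_i^2$ with $i\ge k+1$ and to contain at least $k-1$ E-parts (hence $x_{k+1}^2\in W$ and $n\ge 2k$), and then a single global extremality observation --- every path with $k$ maximal N-parts lies weakly to the right of the alternating path $P=x_{k+1}^2-x_kx_{k+1}-\cdots-x_2x_{2k}$ --- shows that $h_k=1$ forces $P$ to be that unique path, which simultaneously yields $V_{2k}\subseteq W$ and $n=2k$. You instead pass through Lemma~\ref{lem:antichains} and pin down the unique size-$k$ antichain by a sequence of local one-box perturbations, each producing a second maximal antichain unless the configuration is already the staircase $\{(2,2k),(3,2k-1),\dots,(k+1,k+1)\}$. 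Since maximal antichains are exactly the corner sets of paths with $k$ maximal N-parts, the two arguments are dual to one another; yours trades the paper's one-line ``leftmost path'' argument for more case checking, but in return every step is a fully explicit verification (membership in the diagram via monotonicity of the row lengths $c_a$, and genuine distinctness of the perturbed antichain), which makes the terser claims in the paper's version --- in particular why uniqueness forces both $V_{2k}\subseteq W$ and $n=2k$ at once --- completely transparent. Both proofs use only $h_k=h_0=1$, as you note.
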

\begin{proof}
	An NE-path with $k$ maximal N-parts must start in $x_i^2$ for some $i>k$, so $x_{k+1}^2 \in W$. Such a path must also have at least $k-1$ E-parts, thus $n \ge 2k$. The alternating path $P=x_{k+1}^2-x_kx_{k+1}-x_kx_{k+2}-\cdots-x_2x_{2k}$ has $k$ maximal N-parts, and every other path with $k$ maximal N-parts must lie to the right of $P$. Thus, if $h_k=1$, then $P$ is the unique path with $k$ maximal N-parts. It follows that  $V_{2k}\subseteq W$ and $n=2k$, so $W\subseteq{ \st}(x_{2k}^2)$
since $V={\rm st}(x_{2k}^2)$ is the largest set with $n=2k$.
	\end{proof}

Let $D=\{x_ix_j \ | \ i+j=n+2\}$, so that if $n=2k$ the set $D$ is the set of strongly stable generators of $V_{2k}$.  

\begin{thm}\label{thm:main}
	Let $W$ be a strongly stable set of quadratic monomials. The ring $K[W]$ is Gorenstein
	if and only if for some $k$ we have $W=V_{2k}$ or $W=V_{2k} \cup \st(m_1, \dots, m_t) \subset K[x_1, x_2, \ldots, x_{2k}]$ 
	where $m_1, \ldots, m_t$ are monomials satisfying the following conditions, for any 
	$1 \le r, s \le t$. 
	
	\begin{enumerate}
		\item  $m_r=x_ix_j$ where $i \le k+1 <j$ or $i=j>k+1$.
		\item $\st(m_r) \cap \st(m_s) \cap D = \emptyset$
	\end{enumerate}
\end{thm}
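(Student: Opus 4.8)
The plan is to prove both directions by translating everything into the antichain language of Lemma~\ref{lem:antichains}. By Lemma~\ref{lem:necessary} the only candidates for Gorenstein are the sets $W$ with $V_{2k}\subseteq W\subseteq\st(x_{2k}^2)$ for some $k$, so the whole problem lives inside the fixed ambient ring $K[x_1,\dots,x_{2k}]$ and the diagram with the top row removed is a fixed staircase-shaped region $\Delta$ whose points are the pairs $(a,b)$ with $2\le a\le b$, $a+b\le 2k+1$ together with the ``tail'' coming from whichever extra monomials $m_1,\dots,m_t$ we throw in. By Theorem~\ref{thm:V1} the region $\Delta_0$ corresponding to $V_{2k}$ alone is the triangle $T_{k}$ on $k^2$ points, with symmetric antichain-count vector $\bigl(\binom{k}{i}^2\bigr)_i$; I want to understand exactly when adding a strongly stable ``bump'' to $\Delta_0$ keeps the antichain generating function symmetric (equivalently, by Stanley's criterion plus the fact that $K[W]$ is always a Cohen--Macaulay domain, keeps $K[W]$ Gorenstein).

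\medskip

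First I would set up the decomposition of $\Delta=\Delta_0\sqcup E$, where $E$ is the set of extra boxes below the anti-diagonal $a+b=2k+1$ contributed by $\st(m_1,\dots,m_t)$. Any antichain $A$ in $\Delta$ splits as $A=(A\cap\Delta_0)\sqcup(A\cap E)$, and because every point of $E$ has $a+b>2k+1$ while every point of $\Delta_0$ has $a+b\le 2k+1$, there is a comparability constraint linking the two parts: a point $(a,b)\in E$ and a point $(a',b')\in\Delta_0$ are incomparable iff $a<a'\le k+1$ or $b>b'$ fails in the appropriate way — I would pin this down precisely. Condition~(1), that each $m_r=x_ix_j$ has $i\le k+1<j$ or $i=j>k+1$, is exactly the statement that $E$ lies weakly to the left of column $k+1$-ish and strictly below the diagonal, which makes the ``staircase'' $\Delta$ still look like two triangles glued along a common hypotenuse after a reflection. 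The key structural claim is: under condition~(1), reflecting $E$ across the anti-diagonal $a+b=2k+1$ lands it inside (a translate of) $\Delta_0$ itself, and condition~(2), $\st(m_r)\cap\st(m_s)\cap D=\emptyset$, is precisely what guarantees the reflected copies of the $\st(m_r)$ do not overlap along $D$, so that $\Delta$ together with its ``completion'' forms a single larger triangle $T_{k'}$ — or more precisely, that the antichain generating polynomial of $\Delta$ factors/decomposes in a way that forces symmetry.

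\medskip

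The cleanest route for the ``if'' direction is probably an explicit involution on antichains: build a bijection between antichains of size $i$ and antichains of size $k-i$ in $\Delta$. On the triangle $T_k$ this is the known symmetry (via the RSK/Lindström--Gessel--Viennot interpretation behind $\binom{k}{i}^2$, or via Stanley's Exercise 3.47(f) cited above). I would extend that involution across the glued-on region $E$: an antichain point $(a,b)\in E$ gets sent, via the anti-diagonal reflection composed with the triangle involution, to a point that lands legitimately either back in $E$ or in $\Delta_0$, and conditions~(1)--(2) are exactly the hypotheses that make this map well-defined (land in $\Delta$) and size-complementing ($i\mapsto k-i$, same $k$ because $h_k=1$ is forced). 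For the ``only if'' direction I would argue contrapositively: if (1) fails, there is an extra box $x_ix_j$ with $i>k+1$ and $j>k+1$, i.e. a point strictly below-right of the corner of the triangle; such a point is incomparable to the whole top antichain-ish of $\Delta_0$ and one checks it creates an antichain of size $k$ distinct from $P$'s corner set, contradicting $h_k=1$ from Lemma~\ref{lem:necessary}; and if (2) fails, two of the $\st(m_r)$ share a point on $D$, which I would show forces an asymmetry in the middle of the $h$-vector — concretely, it inflates some $h_i$ without inflating the complementary $h_{k-i}$, detectable by counting antichains through that shared $D$-point.

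\medskip

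The main obstacle I anticipate is making the reflection/gluing argument fully rigorous: it is intuitively clear that conditions~(1) and~(2) say ``$\Delta$ is a triangle with some sub-triangles attached symmetrically,'' but antichain counts are not additive over a decomposition, and the comparabilities straddling the anti-diagonal have to be tracked carefully so that the claimed involution is genuinely a bijection and genuinely size-complementing. In particular I expect the delicate point to be antichains that use points on both sides of $a+b=2k+1$ simultaneously — verifying that the involution behaves correctly on those, using condition~(2) to rule out the bad overlaps on $D$, is where the real work lies. A fallback, if the explicit involution gets unwieldy, is to compute the antichain generating function of $\Delta$ as a sum over ``where the antichain crosses $D$'' and reduce each summand to a product of two triangle generating functions $\sum\binom{a}{i}^2 t^i$, whose symmetry is already known; condition~(2) is what keeps these sums clean, and condition~(1) is what keeps each piece a genuine triangle.
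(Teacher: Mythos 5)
Your reduction via Lemma~\ref{lem:necessary} and the translation into antichains via Lemma~\ref{lem:antichains} match the paper's setup, but both halves of the argument have genuine gaps, and the ``only if'' half rests on a concretely false step. You propose to rule out a generator $x_ix_j$ with $k+1<i<j$ (the failure of condition (1)) by producing a second antichain of size $k$ and contradicting $h_k=1$. This cannot work: \emph{every} $W$ with $V_{2k}\subseteq W\subseteq \st(x_{2k}^2)$ has $h_k=1$, since $h_k$ is monotone under adding boxes and equals $1$ at both extremes ($\binom{k}{k}^2=1$ by Theorem~\ref{thm:V1} and $\binom{2k}{2k}=1$ by Theorem~\ref{thm:veronese}). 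For instance $V_6\cup\st(x_5x_6)$ violates condition (1) yet still has $h_3=1$; its failure to be Gorenstein shows up only as $h_2>h_1$ ($15$ versus $14$). The paper locates every obstruction at the level $h_{k-1}$ versus $h_1$, and this requires real counting: one partitions the offending generators into blocks $M_r$ whose traces on $D$ are connected and pairwise disjoint, compares the increase in $h_1$ (the number of added boxes) with the increase in $h_{k-1}$ (paths forced to detour through the anti-diagonals $i+j=2k+3$ and $i+j=2k+4$), and, when some $M_r$ contains a monomial with $i>k+1$, embeds $V_{2k}\cup\st(M_r)$ into a Gorenstein completion $V_{2k}\cup\st(x_j^2)$ and compares increments. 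Your treatment of a failure of condition (2) (``it inflates some $h_i$ without inflating $h_{k-i}$'') restates the goal rather than proving it.

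The ``if'' direction is likewise only a plan: the entire difficulty lies with antichains (or paths) that straddle the anti-diagonal, and you explicitly defer exactly that point. Your fallback --- summing over where the configuration crosses $D$ and factoring each summand into symmetric pieces --- is in essence what the paper does, but with NE-paths rather than antichains: inducting on $t$, it fixes the E-step where a path enters $\st(m_t)$ and the N-step where it leaves, splits the path into three independent segments counted by $\beta(i_1)$ (symmetric by the induction hypothesis), $\binom{a-b-1}{i_2}^2$, and the Narayana number $N(k-a,i_3)$, and applies the substitution $i_1\mapsto b-i_1$, $i_2\mapsto a-b-1-i_2$, $i_3\mapsto k-a+1-i_3$. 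That induction moreover needs a separate base case for a square generator ($W=V_{2k}\cup\st(x_j^2)$, Lemma~\ref{lem:V1_square}), which does not fit the three-segment decomposition and is handled by its own induction on $j$; your sketch does not account for this. As written, the proposal identifies the right reduction and the right combinatorial language but does not contain a proof of either implication.
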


Before we prove Theorem \ref{thm:main} we shall introduce the \emph{Narayana numbers} \[N(k,i)=\frac{1}{k}\binom{k}{i}\binom{k}{i-1}.\]
The Narayana number $N(k,i)$ counts the number of Dyck paths of length $2k$ with $i$ peaks. In our setting $N(k,i)$ is the number of maximal NE-paths in $V_{2k}$ with $i$ maximal N-parts starting in $x_{k+1}^2$. Notice that the Narayana numbers satisfy the relation $N(k,i)=N(k,k-i+1)$.

\begin{lem}\label{lem:V1_square}
	The ring $K[V_{2k} \cup \st(x_j^2)]$, for $k+1 \le j \le 2k$ is Gorenstein.
\end{lem}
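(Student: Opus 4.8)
The plan is to show that $K[V_{2k} \cup \st(x_j^2)]$ is a Cohen--Macaulay domain whose $h$-vector is symmetric, and then invoke the theorem of Stanley quoted above. Cohen--Macaulayness (indeed the full ladder-determinantal/Stanley--Reisner machinery of Conca described in the preliminaries) holds for every $K[W]$ with $W$ strongly stable of quadratic monomials, so the only real content is the symmetry of the $h$-vector. By Lemma \ref{lem:necessary}, since $x_j^2 \in \st(x_{2k}^2)$ for $k+1\le j\le 2k$, the set $W=V_{2k}\cup\st(x_j^2)$ still has top degree $k$ and dimension $2k$, so we must show $h_i=h_{k-i}$ for the $h$-vector $(h_0,\dots,h_k)$ of $K[W]$, where by Lemma \ref{lem:antichains} $h_i$ counts size-$i$ antichains in the diagram of $W$ with its top row removed.

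First I would describe that diagram explicitly. With the top row deleted, the diagram of $V_{2k}$ is the staircase triangle $T$ on the points $(a,b)$ with $2\le a\le k+1$ and $a\le b$ and $a+b\le 2k+2$; the diagram of $\st(x_j^2)$ contributes (after deleting the top row) the rectangle-like region of all $(a,b)$ with $2\le a\le j$ and $a\le b\le j$. So the region $\Delta$ whose antichains we must count is $T$ together with the extra points lying in rows $2,\dots,j$ and columns $\le j$ but outside $T$ — these are exactly the points $(a,b)$ with $2\le a\le b\le j$ and $a+b\ge 2k+3$. The key combinatorial claim is that the poset $\Delta$ has an order-reversing involution, or more directly that its antichain-counting sequence $(h_i)$ is symmetric. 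I would look for an explicit anti-automorphism of $\Delta$: the map $(a,b)\mapsto(k+3-b,\,k+3-a)$ is the natural order-reversing symmetry of the triangle $T$ (it fixes the anti-diagonal $a+b=k+3$), and the point is to check that it also maps the added region onto itself. This is where the hypothesis $j\le 2k$ should be used: it guarantees the added region sits symmetrically about the line $a+b=k+3$ within the ambient square $\{2\le a\le b\le j\}$, so that the involution carries $\Delta$ to itself.

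The main obstacle is precisely verifying that this involution (or whatever order-reversing bijection one uses) restricts to a bijection on the full region $\Delta = T \cup (\text{added points})$ and not merely on $T$; once that is done, it sends size-$i$ antichains to size-$i$ antichains bijectively, giving $h_i=h_{k-i}$ and hence symmetry. An alternative route, in case the naive involution fails at the boundary, is to compute $h_i$ via a peak/valley or lattice-path model: antichains in $\Delta$ should correspond to certain families of non-crossing or Dyck-type paths, and one can then read off symmetry from the Narayana-type symmetry $N(k,i)=N(k,k-i+1)$ recalled just above, possibly after a convolution identity reflecting the union $V_{2k}\cup\st(x_j^2)$. Either way, symmetry of $(h_0,\dots,h_k)$ together with the Cohen--Macaulay domain property yields Gorensteinness by the cited theorem of Stanley.
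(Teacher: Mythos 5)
Your main route has a fundamental logical flaw: an order-reversing involution of a poset sends an antichain of size $i$ to an antichain of size $i$, so at best it proves $h_i=h_i$, not $h_i=h_{k-i}$. (A two-element chain admits an order-reversing involution but has antichain vector $(1,2)$.) Symmetry of the antichain-counting sequence is a genuinely deeper property and does not follow from any self-duality of the diagram. On top of that, the specific map $(a,b)\mapsto(k+3-b,\,k+3-a)$ does not even carry the staircase triangle $T$ into itself: the point $(2,2k)$ is sent to $(3-k,\,k+1)$, which lies outside the region once $k\ge 2$; indeed $T$ has a unique minimal element $(2,2)$ and $k$ maximal elements, so it admits no order-reversing involution at all. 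Note that even for $W=V_{2k}$ the symmetry $h_i=\binom{k}{i}^2$ rests on the nontrivial antichain counts of Proctor and Stembridge cited in Theorem \ref{thm:V1}, not on a visible symmetry of the picture.

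The viable route is the one you relegate to a fallback. The paper's proof is by induction on $j$: the maximal NE-paths of $V_{2k}\cup\st(x_j^2)$ not already present in $V_{2k}\cup\st(x_{j-1}^2)$ are exactly those containing one particular N-step, and each such path splits into an initial segment (an arbitrary NE-path of length $2(j-k-1)$ ending in an N-step, contributing $\binom{2(j-k-1)}{2i_1+1}$ choices when it has $i_1$ maximal N-parts besides the forced final one) and a terminal segment counted by a Narayana number $N(2k-j+1,i_2)$. The substitution $i_1\mapsto j-k-2-i_1$, $i_2\mapsto 2k-j+2-i_2$ fixes both factors and sends $i=i_1+i_2$ to $k-i$, so the increment of the $h$-vector at each inductive step is symmetric, and the base case $j=k+1$ is Theorem \ref{thm:V1}. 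To repair your proposal you would need to carry out this convolution computation (or an equivalent explicit antichain count for the enlarged region); as written, the argument does not go through.
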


The proof of Lemma \ref{lem:V1_square} is illustrated in Figure \ref{fig:Vsquare}.

\begin{figure}[ht]
	\includegraphics[scale=0.43]{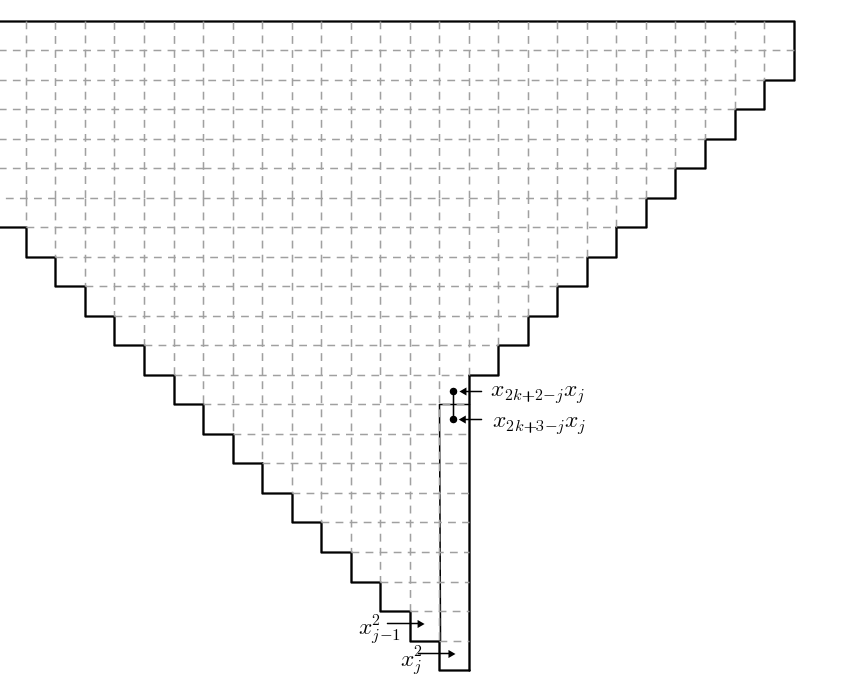}
	\caption{$V_{2k} \cup \st(x_j^2)$ in the proof of Lemma \ref{lem:V1_square}}\label{fig:Vsquare}
\end{figure}

\begin{proof}
	The proof is by induction over $j$. As $x_{k+1}^2 \in V_{2k}$ the base case $j=k+1$ is Theorem \ref{thm:V1}. We now want to compute how many maximal NE-paths with $i$ maximal N-parts are added when we extend $V_{2k} \cup \st(x_{j-1}^2)$ to $V_{2k} \cup \st(x_{j}^2)$. The paths in $V_{2k} \cup \st(x_{j}^2)$ that go outside of $V_{2k} \cup \st(x_{j-1}^2)$ are precisely those paths including the N-step $x_{2k-j+3}x_j-x_{2k-j+2}x_j$. 
	
	Let us first consider the NE-paths to $x_{2k-j+2}x_j$ where the last step is an N-step. The NE-paths from $x_\ell^2$, for any $\ell$, to $x_{2k-j+2}x_j$ are all NE-paths of length $2(j-k-1)$. We want to compute the number of such paths, ending with an N-step, and with say $i_1$ maximal N-parts not counting the last N-part. To do this we choose $2i_1+1$ numbers between $1$ and $2(j-k-1)$. The first number is the starting position of the first N-part. The second number is the starting position of the first E-part, the third number is the stating position of the second N-part, and so on, measured in length from the beginning of the path. Hence the number of choices is 
	\[\binom{2(j-k-1)}{2i_1+1}.\] 
	
	Next we consider the NE-paths from $x_{2k-j+2}x_j$ to $x_1x_{2k+1}$. Such a path must start with an N-step, which then continues the N-part in the end of the path to $x_{2k-j+2}x_j$. The number of paths from $x_{2k-j+2}x_j$ to $x_1x_{2k+1}$ with $i_2$ maximal N-parts is $N(2k-j+1,i_2)$. 
	
	We can now conclude that the number of maximal NE-paths with $i=i_1+i_2$ maximal N-parts and including the step $x_{2k-j+3}x_j-x_{2k-j+2}x_j$ is 
	\[
	\sum\binom{2(j-k-1)}{2i_1+1} N(2k-j+1,i_2)
	\] 
	 where the sum is over all $i_1, i_2$ such that $i_1+i_2=i$, $0 \le i_1 \le j-k-2$, and $1 \le i_2 \le 2k-j+1$. We get the same result if we replace $i_1$ by $j-k-2-i_1$ and $i_2$ by $2k-j+2-i_2$ since 
	 \[
	        \binom{2(j-k-1)}{2(j-k-2-i_1)+1}  =\binom{2(j-k-1)}{2i_1+1}
	 \]
	 and
	 \[
	 N(2k-j+1,2k-j+2-i_2)=N(2k-j+1,i_2).
	 \]
	 In that case we are counting the NE-paths with 
	 \[
	 (j-k-2-i_1)+(2k-j+2-i_2)=k-i
	 \]
	 maximal N-steps, so we can conclude that the $h$-vector is symmetric.
\end{proof}

We are now ready to prove that the conditions in Theorem \ref{thm:main} are sufficient. The proof is illustrated in Figure \ref{fig:if}.

\begin{figure}[ht]
	\includegraphics[scale=0.47]{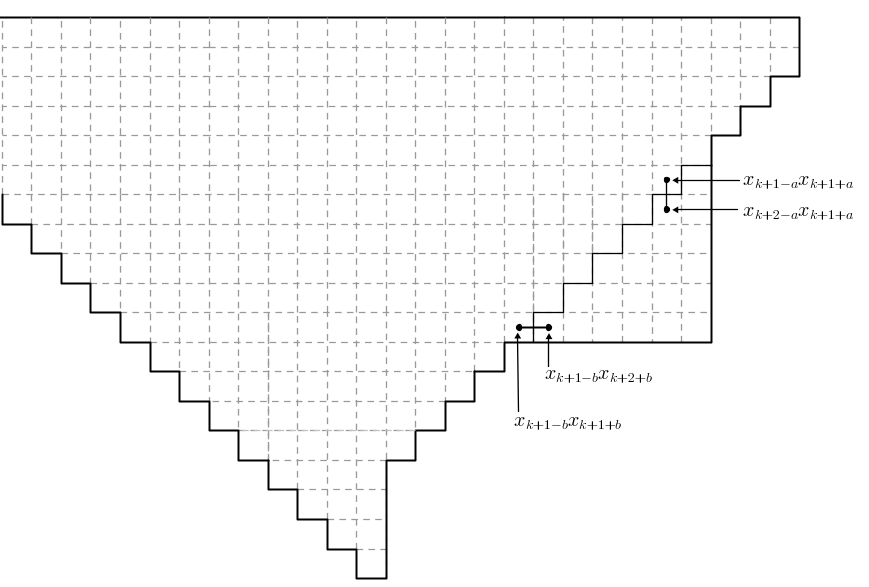}
	\caption{The fixed steps in the ``if''-part of Theorem \ref{thm:main}.}
	\label{fig:if}
\end{figure} 

\begin{proof}[Proof of the ``if''-part of Theorem \ref{thm:main}]
  We shall now prove that a strongly stable set 
  \[W=V_{2k} \cup \st(m_1, \dots, m_t)\]
   with $m_1, \ldots, m_t$ as stated in Theorem \ref{thm:main} defines a Gorenstein ring. Notice that at most one of $m_1, \ldots, m_t$ can be a square, as $\st(x_i^2) \subset \st(x_j^2)$ if $i<j$. If $t=1$ and $m_1=x_j^2$ the result is proved in Lemma \ref{lem:V1_square}. We assume from now on that $t>1$ or $t=1$ but $m_1$ is not a square. We proceed by induction over $t$. We may order $m_1, \ldots, m_t$ so that $m_t$ sits above and to the right of $m_1, \dots, m_{t-1}$ in the diagram. 
  
  Suppose the strongly stable set $W'=V_{2k} \cup \st(m_1, \dots, m_{t-1})$ defines a Gorenstein ring. If $t=1$ this should be interpreted as $W'=V_{2k}$, which indeed is Gorenstein. We want to prove that when we extend $W'$ to $W' \cup \st(m_t)=W$ the number of new paths with $i$ N-parts and the number of new paths with $k-i$ N-parts are the same. 
  
  Let us fix an E-step $x_{k+1-b}x_{k+1+b}-x_{k+1-b}x_{k+2+b}$ and an N-step $x_{k+2-a}x_{k+1+a}-x_{k+1-a}x_{k+1+a}$ inside $\st(m_t)$, such that $a>b$. The idea is to consider paths that stays inside $W'$ before the fixed E-step and after the fixed N-step. Every path in $W$ that goes outside $W'$ can be described in this way for a unique pair $a,b$. Hence is it enough to prove the statement for a fixed $a$ and $b$. 
  
  Let us first consider paths to $x_{k+1-b}x_{k+1+b}$ in $W'$ with $i_1$ maximal N-parts. Let us call the number of such paths $\beta(i_1)$. This is the same as the number of paths with $i_1$ maximal N-parts in $V_{2b} \cup \st(m_1, \ldots, m_{t-1})$ which defines a Gorenstein ring, by the induction hypothesis. It follows that $\beta(i_1)=\beta(b-i_1)$. 
  
  Next we consider paths from $x_{k+1-b}x_{k+2+b}$ to $x_{k+2-a}x_{k+1+a}$ with $i_2$ maximal N-parts. If the path ends with an N-step we will not count the last N-part, as it will be counted in the next step of the proof. These paths stay inside a square with side $a-b$. To do this we choose $i_2$ points that represent the endpoints of the maximal N-parts. We do not choose points from the rightmost column or the bottom row in the square. There are $\binom{a-b-1}{i_2}^2$ ways to choose these points. 
  
  Last we consider paths from $x_{k+1-a}x_{k+1+a}$ to $x_2x_{2k}$ with $i_3$ maximal N-parts. The number of such paths is $N(k-a,i_3)$. We can now conclude that the number of maximal NE-paths with $i$ N-parts and a fixed E-step $x_{k+1-b}x_{k+1+b}-x_{k+1-b}x_{k+2+b}$ and N-step $x_{k+2-a}x_{k+1+a}-x_{k+1-a}x_{k+1+a}$ is
  \[
  \sum \beta(i_1) \binom{a-b-1}{i_2}^2N(k-a,i_3)
  \] 
  where the sum is over all $i_1, i_2, i_3$ such that $i_1+i_2+i_3=1$, $0 \le i_1 \le b$, $0 \le i_2 \le a-b-1$ and $1 \le i_3 \le k-a $. Notice that we obtain the same result if we replace $i_1$ by $b-i_1$, $i_2$ by $a-b-1-i_2$, and $i_3$ by $k-a+1-i_3$. In that case we are counting the paths with 
  \[
  (b-i_1)+(a-b-1-i_2)+(k-a+1-i_3)=k-i
  \] 
  maximal N-parts. As this holds for all possible choices of $a$ and $b$, we have now proved that  
   the number of new paths with $i$ N-parts and the number of new paths with $k-i$ N-parts are the same, when we extend $W'$ to $W$.
\end{proof}

Before we prove the "only if" part, we give an example. Suppose $k=5$ and that
${\rm st}(m_1,\ldots,m_t)\cap D=\{x_3x_9,x_4x_8,x_5x_7\}$, so $\{m_1,\ldots,m_t\}$ is either
$\{x_4x_9,x_5x_8\}$ or $\{x_5x_9\}$. We will look for a condition for $h_1=h_{k-1}=h_4$. The
paths with four maximal N-parts which are not paths in $V_{10}$ all starts with $x_6^2-x_5x_6-x_5x_7$ and all ends with $x_3x_9-x_2x_9-x_2x_{10}-x_1x_{10}$. There are three possibilities
between $x_5x_7$ and $x_3x_9$, namely $x_5x_7- x_5x_8 -x_4x_8-x_4x_9 -x_3x_9$, $x_5x_7-x_4x_7-x_4x_8-x_4x_9  -x_3x_9$, or $x_5x_7-x_5x_8-x_4x_8-x_3x_8  -x_3x_9$. Thus $h_4$ increases with three when we extend $V_{10}$ by $\st(x_4x_9,x_5x_8)$ or $\st(x_5x_9)$. The increase
in $h_1$ is either two or three for the two possibilities.

\begin{proof}[Proof of the ``only if''-part of Theorem \ref{thm:main}]
By Lemma \ref{lem:necessary} we may assume $n=2k$ and $V_{2k} \subseteq W$.  
The idea is to prove that $h_{k-1}>h_1$ when the given conditions are not satisfied. To this end, we start by describing all possible paths with $k-1$ maximal N-parts. Such paths have the three possible starting points $x_k^2$, $x_{k+1}^2$ and $x_{k+2}^2$.
In addition to $D$ as previously defined, we let $D'=\{x_ix_j \ | \ i+j=2k+3\}$ and $D''=\{x_ix_j \ | \ i+j=2k+4\}$. The paths starting in $x_k^2$ have all $k-1$ maximal N-parts of length 1, and stay inside $V_{2k}$.  The paths starting in $x_{k+1}^2$ have one maximal N-part of length 2, and the remaining $k-2$ maximal N-parts of length 1. Such a path lives in $V_{2k} \cup D'$, and after the N-part of length 2 it stays inside $V_{2k}$.  The paths starting in $x_{k+2}^2$ can have two maximal N-parts of length 2, and $k-3$ maximal N-parts of length 1, or it can have one N-part of length 3, and $k-2$ maximal N-parts of length 1. Such a path lives in $V_{2k} \cup D' \cup D''$. After an N-part of length greater than 1 it stays inside $V_{2k} \cup D'$, and after an N-part of length 3, or two N-parts of length 2 it stays inside $V_{2k}$. Notice also that the paths starting in $x_{k+2}^2$ have all their maximal E-parts of length 1.

Let us partition the set of monomials $\{m_1, \ldots, m_t\}$ into subsets $M_1, \ldots, M_s$, and let $D_r=\st(M_r) \cap D$ for $r=1, \ldots, s$. We choose the finest possible partition such that $D_1, \ldots, D_s$ are disjoint, and each $D_r$ is connected in the sense that $D_r=\{x_ix_j \ | \ i+j=2k+2, a_r \le i \le b_r\}$ for some numbers $a_r$ and $b_r$. The conditions of Theorem \ref{thm:main} are fulfilled precisely when each $M_r$ consists of one monomial $x_ix_j$ where $i \le k+1 <j$ or $i=j>k+1$. Notice that a NE-path with $k-1$ maximal N-parts can enter at most one of the areas $\st(M_r)\setminus V_{2k}$. The same holds for paths with one maximal N-part. It is therefore enough to prove $h_{k-1}>h_1$ for each $M_r$ separately.  

Let us first consider the case when $M_r$ contains no monomial $x_ix_j$ with $i>k+1$. We shall compute the increase in $h_1$ and $h_{k-1}$ when we extend $V_{2k}$ to $V_{2k} \cup \st(M_r)$. The increase in $h_1$ is the number of boxes added, which is $\le \binom{b_r-a_r+1}{2}$ with equality precisely if $M_r$ consists of one monomial. The increase in $h_{k-1}$ is the number of paths that starts in $x_{k+1}^2$ and alternates N-steps and E-steps, except one E-part of length two where the path enters $D'$, and one N-part of length two where it leaves $D'$ for good. There are $b_r-a_r$ points in $D' \cap \st(M_r)$. As the point where the path enters $D'$ and the last point it visits on $D'$ may be the same point the increase in $h_{k-1}$ is  $\binom{b_r-a_r+1}{2}$. As $K[V_{2k}]$ is Gorenstein, we can conclude that $h_{k-1}>h_1$ for $V_{2k} \cup \st(M_r)$, if $M_r$ consists of more than one monomial. 

Next we consider the case when $M_r$ contains a monomial $x_ix_j$ with $i>k+1$. The idea in this case is to extend $V_{2k} \cup \st(M_r)$ to a strongly stable set that defines a Gorenstein algebra, and prove that the increase in $h_1$ is higher than the increase in $h_{k-1}$. As a first step, we extend $V_{2k} \cup\st(M_r)$ to $V_{2k} \cup\st(M_r) \cup \st(x_\ell^2)$, where $\ell$ is the greatest integer for which  we are not adding any boxes on $D''$. Hence all new boxes added when we go from   $V_{2k} \cup\st(M_r)$ to $V_{2k} \cup\st(M_r) \cup \st(x_\ell^2)$ lie below $D''$, so this does not increase $h_{k-1}$. If $\st(M_r) \subseteq \st(x_\ell^2)$ we can conclude $h_{k-1}>h_1$ for $K[V_{2k} \cup \st(M_r)]$, unless of course $M_r=\{x_\ell^2\}$. Suppose   $\st(M_r) \not \subseteq \st(x_\ell^2)$. Then, as the next step, we extend $V_{2k} \cup\st(M_r) \cup \st(x_\ell^2)$ to $V_{2k}\cup \st(x_\ell^2, x_ix_j)$ where $i=2k+2-\ell$ and $j$ is the smallest index for which $\st(M_r) \subseteq \st(x_\ell^2,x_ix_j)$. Notice that the monomial $x_{i+1}x_{\ell+1}$ on $D''$ is not included. This means that a path with $k-1$ maximal N-parts must stay inside $V_{2k} \cup D'$ after the $\ell$-th column. As all boxes added when we extend  $V_{2k} \cup\st(M_r) \cup \st(x_\ell^2)$ to $V_{2k}\cup \st(x_\ell^2, x_ix_j)$ lie to the right of $D'$ we do not increase $h_{k-1}$. As the final step we shall now extend $V_{2k}\cup \st(x_\ell^2, x_ix_j)$ to $V_{2k}\cup \st(x_j^2)$. The increase in $h_1$ in this step is the number of boxes added, which is 
\[
(j-\ell) \frac{(\ell-i+1)+(j-i)}{2} = (j-\ell) \frac{\ell+j-2i+1}{2}.
\] 
The new paths with $k-1$ maximal N-parts allowed by this extension are precisely those that go through $x_{i+1}x_{\ell+1}$. To reach this box we must start in $x_{k+1}^2$ and alternate N-steps and E-steps. If we continue to alternate N-steps and E-steps after $x_{i+1}x_{\ell+1}$ we end up in $x_{2k+3-j}x_{j+1}$. This is illegal, because $x_{2k+3-j}x_{j+1}$ is two steps below our diagram. To solve this we must insert our two extra N-steps in the columns $\ell+1$ through $j$. This can be done in $\binom{j-\ell+1}{2}$ ways, which is then the increase in $h_{k-1}$. To conclude that the increase in $h_1$ was higher, first note that $\ell>i$ as $i= 2k+2-\ell$ and $\ell>k+1$. The increase in $h_1$ was
\[
(j-\ell) \frac{\ell+j-2i+1}{2} >(j-\ell) \frac{j-\ell+1}{2} = \binom{j-\ell+1}{2}.
\]
To summarize this case we have extended $V_{2k} \cup \st(M_r)$ to $V_{2k} \cup \st(x_j^2)$ which defines a Gorenstein algebra. Unless both sets are equal the increase in $h_1$ was higher than the increase in $h_{k-1}$, which implies $h_{k-1}>h_1$ for $K[V_{2k} \cup \st(M_r)]$. We can conclude that the only Gorenstein algebras are those characterized by the conditions in Theorem \ref{thm:main}.
\end{proof}

\begin{cor}
For a strongly stable set $W$ of quadratic monomials, the ring $K[W]$ is Gorenstein if and only if $h_i=h_{k-i}$ for $i=0$ and $i=1$.
\end{cor}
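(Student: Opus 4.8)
The plan is to deduce the corollary from Theorem~\ref{thm:main} by reading off the internal structure of the proof of its ``only if'' part, which already shows that the whole obstruction to the $h$-vector being symmetric lives in the comparison of $h_1$ with $h_{k-1}$. One direction is soft: by Theorem~\ref{thm:main} a Gorenstein $K[W]$ is of one of the listed forms, and the proof of the ``if'' part exhibits a fully symmetric $h$-vector for each of them (alternatively one may simply invoke the classical fact that a graded Gorenstein algebra has symmetric $h$-vector), so $h_i=h_{k-i}$ for all $i$, in particular for $i=0,1$.

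For the converse, assume $h_i=h_{k-i}$ for $i=0,1$; since $h_0=1$ this says precisely that $h_k=1$ and $h_1=h_{k-1}$. First I would make the standard reduction to the no-free-variable case: if the last column of the diagram of $W$ is the single box $x_1x_n$, then $x_1x_n$ is the only generator divisible by $x_n$, so $K[W]\cong K[W\setminus\{x_1x_n\}]\,[x_1x_n]$ is a polynomial extension; this alters neither the Gorenstein property nor the numerator $h(t)$ (hence neither $h_1$ nor $h_{k-1}$), so iterating we may assume $x_2x_n\in W$. Now the argument in the proof of Lemma~\ref{lem:necessary} uses only $h_k=1$ together with the no-free-variable hypothesis, and it gives $n=2k$ and $V_{2k}\subseteq W\subseteq\st(x_{2k}^2)$. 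If $W=V_{2k}$ we are done by Theorem~\ref{thm:V1}; otherwise $W=V_{2k}\cup\st(m_1,\dots,m_t)\subset K[x_1,\dots,x_{2k}]$ with $t\ge 1$.

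Next I would import the combinatorial setup from the proof of the ``only if'' part of Theorem~\ref{thm:main}: partition $\{m_1,\dots,m_t\}$ into the finest blocks $M_1,\dots,M_s$ making the sets $D_r=\st(M_r)\cap D$ pairwise disjoint intervals on $D$. That proof records that an NE-path with $1$, resp.\ with $k-1$, maximal N-parts enters at most one of the regions $\st(M_r)\setminus V_{2k}$; hence, writing $\Delta^{(r)}_j$ for the number of new maximal paths with $j$ maximal N-parts that appear when one passes from $V_{2k}$ to $V_{2k}\cup\st(M_r)$, one has $h_j(W)=h_j(V_{2k})+\sum_{r=1}^s\Delta^{(r)}_j$ for $j\in\{1,k-1\}$. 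Since $K[V_{2k}]$ is Gorenstein, $h_1(V_{2k})=h_{k-1}(V_{2k})$, so
\[
h_{k-1}(W)-h_1(W)=\sum_{r=1}^s\bigl(\Delta^{(r)}_{k-1}-\Delta^{(r)}_1\bigr).
\]
The body of the ``only if''-part proves each summand is $\ge 0$, and is strictly positive unless $M_r$ consists of a single monomial $x_ix_j$ with $i\le k+1<j$ or $i=j>k+1$. Therefore $h_1=h_{k-1}$ forces every $M_r$ to be such a monomial, which is exactly conditions~(1) and~(2) of Theorem~\ref{thm:main}; that theorem then yields that $K[W]$ is Gorenstein.

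Everything here is a quotation of Theorem~\ref{thm:main} and its proof, except one point that I would verify carefully: the additivity of the increments $\Delta^{(r)}_j$ over the blocks, i.e.\ that each NE-path relevant to $h_1$ or to $h_{k-1}$ enters at most one region $\st(M_r)\setminus V_{2k}$ (equivalently, that these regions, and the new boxes in them, are disjoint across the blocks). This is precisely the observation already used in the proof of the ``only if'' part, so in the end the corollary just repackages that argument, recording that the obstruction to Gorensteinness is visible already in the single equality $h_1=h_{k-1}$.
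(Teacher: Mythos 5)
Your proposal is correct and follows exactly the route the paper intends: the corollary is stated without proof because the ``only if''-part of Theorem~\ref{thm:main} is organized precisely to show that $h_0=h_k$ forces $V_{2k}\subseteq W\subseteq\st(x_{2k}^2)$ (via Lemma~\ref{lem:necessary}) and that $h_{k-1}>h_1$ whenever the conditions of the theorem fail, so $h_1=h_{k-1}$ forces those conditions and hence Gorensteinness. Your explicit handling of the free-variable reduction and of the additivity of the increments over the blocks $M_r$ just makes visible the steps the paper leaves implicit.
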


\begin{cor}
Let $W$ be a strongly stable set of quadratic monomials.
If $K[W]$ is Gorenstein and has no free variable, then $\dim(K[W])$ is even.
\end{cor}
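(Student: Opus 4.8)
The plan is to read this off directly from Lemma~\ref{lem:necessary}; essentially all the content is already there. First I would record the shape of the Hilbert series. As recalled in Section~2, $K[W]$ is Cohen--Macaulay of Krull dimension $n$, where $n$ is the number of boxes in the top row of the diagram of $W$ (equivalently, the largest index with $x_1x_n\in W$); this is Conca's argument via the shellable initial complex of the associated symmetric ladder determinantal ideal. Hence the Hilbert series of $K[W]$ can be written as $h(t)/(1-t)^n$ with $h(1)\neq 0$, and I set $k=\deg h(t)$, so that $(h_0,\dots,h_k)$ is the $h$-vector described in Lemma~\ref{lem:antichains}.

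Next, suppose $K[W]$ is Gorenstein and has no free variable. These are exactly the hypotheses of Lemma~\ref{lem:necessary}, which therefore applies and gives $n=2k$. Since $\dim(K[W])=n=2k$, the dimension is even, which is the assertion.

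I expect no real obstacle: the work is entirely inside Lemma~\ref{lem:necessary}, whose proof exhibits the alternating maximal path $x_{k+1}^2-x_kx_{k+1}-x_kx_{k+2}-\cdots-x_2x_{2k}$ with $k$ maximal N-parts to force $n\ge 2k$, and then uses $h_k=1$ (which follows from $h_0=1$ together with the symmetry of the $h$-vector of a Gorenstein algebra) to pin $n$ down to exactly $2k$. The only point deserving a sentence is that the hypothesis ``no free variable'' cannot be dropped: $K[\st(x_1^2)]\cong K[y]$ is Gorenstein of odd dimension $1$, so some such normalization is genuinely needed for the statement to hold.
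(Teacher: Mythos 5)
Your argument is correct and is exactly how the paper intends this corollary to be read: it is an immediate consequence of Lemma~\ref{lem:necessary} (equivalently of Theorem~\ref{thm:main}, since every set $V_{2k}\subseteq W\subseteq\st(x_{2k}^2)$ lives in $2k$ variables), giving $\dim(K[W])=n=2k$. Your closing remark that the ``no free variable'' hypothesis is genuinely needed is a sensible sanity check but not part of the proof.
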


\begin{ex}
In \cite{mi-na} the authors construct Gorenstein algebras with $h$-vectors $(1,s+t,st+2,s+t,1)$
for each $s,t\ge2$ and with $h$-vectors $(1,s+t,st+t+1,st+t+1,s+t,1)$ for each $s\ge2$ and $t\ge3$.
In Question 2.12 they ask if these are the only $h$-vectors for Artinian Gorenstein rings of
socle degrees 4 and 5. We have calculated the $h$-vectors for our rings of dimension $2k$ with $k\le5$ in the Appendix. We can factor out with a linear regular sequence to get Artinian rings with the same $h$-vector and socle degree $k$. In fact, all our examples with $k=4$ or $k=5$ give counterexamples to their question.
\end{ex}

We have seen that if $K[W]$ is Gorenstein with no free variable and has Hilbert series 
$h(t)/(1-t)^{\dim K[W]}$, where $\deg(h(t))=k$, then $\dim K[W]=2k$ and $W$ lies between
$V_{2k}$ and $\st(x_{2k}^2)$. Both extreme sets give Gorenstein rings by
Theorem \ref{thm:main}. In fact, $K[\st(x_{2k}^2)]$ is the second Veronese subring of
$K[x_1,\ldots,x_{2k}]$, and it was proved to be Gorenstein in \cite[Theorem 3.2.1]{go-wa}. The Hilbert series of $K[\st(x_n^d)]$ was computed for $d=2$ in \cite{co} and for general $d$ in \cite{br-we}. For $d=2$ one can easily verify that the $h$-vector is symmetric if and only if $n$ is even. 

\begin{thm}\label{thm:veronese}
$K[\st(x_n^2)]$ has Hilbert series $\sum_{i=1}^{\lfloor n/2\rfloor}\binom{n}{2i}t^i/(1-t)^n$.
It is Gorenstein if and only if $n$ is even.
\end{thm}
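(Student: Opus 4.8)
The plan is to compute the $h$-vector of $K[\st(x_n^2)]$ via Lemma~\ref{lem:antichains} and then read off symmetry, invoking the criterion of Stanley \cite[Theorem 4.4]{st} (a graded Cohen--Macaulay domain with symmetric $h$-vector is Gorenstein) for the ``Gorenstein'' direction and the first corollary following Theorem~\ref{thm:main} for the converse. We may assume $n\ge 2$, the case $n=1$ being a polynomial ring in one variable. The diagram of $\st(x_n^2)$ is the full staircase $\{(a,b)\mid 1\le a\le b\le n\}$, whose top row consists of the $n$ boxes $(1,1),\dots,(1,n)$; hence $\dim K[\st(x_n^2)]=n$, and deleting the top row leaves the poset $P$ on $\{(a,b)\mid 2\le a\le b\le n\}$ under the coordinatewise order.

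The key step is to count the antichains of $P$. Writing an antichain of size $i$ with strictly increasing first coordinates $a_1<\cdots<a_i$ forces strictly decreasing second coordinates $b_1>\cdots>b_i$, and since then $a_r<a_i\le b_i<b_r$ for $r<i$, the only surviving constraint is $a_i\le b_i$. After relabelling $\{2,\dots,n\}$ as $\{1,\dots,n-1\}$, the antichains of size $i$ are therefore in bijection with pairs $(A,B)$ of $i$-subsets of $\{1,\dots,n-1\}$ with $\max A\le\min B$. I would finish by the map sending such a pair, $A=\{a_1<\cdots<a_i\}$ and $B=\{b_1<\cdots<b_i\}$, to the $2i$-subset $\{a_1,\dots,a_i\}\cup\{b_1+1,\dots,b_i+1\}$ of $\{1,\dots,n\}$: since $a_i\le b_1<b_1+1$, the smallest $i$ elements of the image are exactly $a_1,\dots,a_i$, and the obvious shift-back recovers $(A,B)$, so this is a bijection onto all $2i$-subsets of an $n$-set. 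Hence $h_i=\binom{n}{2i}$ for every $i\ge 0$ (the empty antichain giving $h_0=1$), the $h$-vector stops at $k=\lfloor n/2\rfloor$ because $\binom n{2i}=0$ for $2i>n$, and the Hilbert series is $\sum_{i=0}^{\lfloor n/2\rfloor}\binom n{2i}t^i/(1-t)^n$. (Equivalently one may quote Conca's formula \cite{co}, or use that $K[\st(x_n^2)]$ is the second Veronese of $K[x_1,\dots,x_n]$, whose degree-$i$ piece has dimension $\binom{n-1+2i}{n-1}$, together with $\sum_i\binom n{2i}t^i=\tfrac12\bigl((1+\sqrt t)^n+(1-\sqrt t)^n\bigr)$ and $(1-t)^{-n}=\sum_j\binom{n-1+j}{n-1}t^j$.)

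For the Gorenstein statement, recall that $K[\st(x_n^2)]$ is a domain (a subring of a polynomial ring) and is Cohen--Macaulay by the discussion preceding Lemma~\ref{lem:antichains}. If $n=2k$ is even, then $h_{k-i}=\binom{2k}{2k-2i}=\binom{2k}{2i}=h_i$, so the numerator is symmetric and $K[\st(x_n^2)]$ is Gorenstein by Stanley's criterion. If $n$ is odd, then $k\ge 1$ and $h_k=\binom{n}{2k}=\binom{n}{n-1}=n>1=h_0$, so the $h$-vector is not symmetric and $K[\st(x_n^2)]$ is not Gorenstein by the first corollary following Theorem~\ref{thm:main}.

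I do not expect a genuine obstacle here: the only content beyond bookkeeping is the antichain count, and for that the crux is getting the shift-by-one bijection with even-size subsets exactly right. The domain and Cohen--Macaulay properties, and the reading off of symmetry, are immediate or already available in the paper; one should only remember to dispatch the degenerate case $n=1$ and to check $h_k\ne h_0$ in the odd case.
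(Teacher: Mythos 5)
Your proof is correct, but it takes a genuinely different route from the paper. The paper gives no self-contained argument here: it quotes the Hilbert series of $K[\st(x_n^d)]$ from Conca \cite{co} (for $d=2$) and Brenti--Welker \cite{br-we}, and asserts that symmetry of the $h$-vector for $n$ even is ``easily verified''; indeed, the corollary right after Theorem~\ref{thm:veronese} then \emph{deduces} the antichain count $\binom{n}{2i}$ for the triangular poset from that cited series via Lemma~\ref{lem:antichains}. You run the logic in the opposite direction: you count the antichains of the staircase with top row deleted directly, via the shift-by-one bijection between pairs $(A,B)$ with $\max A\le\min B$ and $2i$-subsets of $\{1,\dots,n\}$ (which I checked and is correct, including the observation that in an antichain the first coordinates are distinct and the single constraint $a_i\le b_i$ implies all the others), and then read the Hilbert series off from Lemma~\ref{lem:antichains}. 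What you lose is brevity; what you gain is that the external references become unnecessary and the subsequent corollary falls out as a byproduct rather than a consequence. Your handling of the two directions of the Gorenstein claim (Stanley's criterion for $n$ even, $h_k=n\ne 1=h_0$ plus the first corollary after Theorem~\ref{thm:main} for $n$ odd) matches what the paper intends. One small caveat: for $n=1$ the ring is a polynomial ring in one variable and hence \emph{is} Gorenstein with $n$ odd, so the ``only if'' direction genuinely requires $n\ge 2$; your aside dispatching $n=1$ should acknowledge that the statement itself, not just the proof, excludes that degenerate case (the paper implicitly assumes $n\ge2$ throughout, e.g.\ via the no-free-variable convention).
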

%

\begin{cor}
In the triangular area consisting of the integer points in $\{ (x,y);y\ge1,1\le x\le n-1,y\le x\}$ with
the partial order $(a,b)\le(c,d)$ if $a\le c$ and $b\le d$, there are $\binom{n}{2i}$ antichains
of size $i$, $0\le i\le n/2$.
\end{cor}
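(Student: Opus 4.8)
The plan is to reinterpret the triangular area as the diagram of $\st(x_n^2)$ with its top row deleted, and then read off the count of antichains from Theorem~\ref{thm:veronese} via Lemma~\ref{lem:antichains}.

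First I would spell out the diagram of $\st(x_n^2)$. Applying strong stability to $x_n^2$ one obtains every monomial $x_ix_j$ with $1\le i\le j\le n$, so the diagram has a box in row $i$ and column $j$ for each such pair; deleting the top row leaves exactly the points $(i,j)$ with $2\le i\le j\le n$, carrying the order $(i,j)\le(i',j')$ iff $i\le i'$ and $j\le j'$. I would then check that the assignment $(i,j)\mapsto(x,y):=(j-1,\,i-1)$ is a bijection from this set onto $\{(x,y):y\ge1,\ 1\le x\le n-1,\ y\le x\}$ --- the chain of inequalities $2\le i\le j\le n$ is literally equivalent to $1\le y\le x\le n-1$ --- and that it is an isomorphism of posets: the map is the composition of the coordinate swap with the translation by $(-1,-1)$, both of which preserve the componentwise order. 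Consequently it restricts to a bijection between antichains of size $i$ on the two sides.

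Next I would invoke Lemma~\ref{lem:antichains}: for $1\le i\le\lfloor n/2\rfloor$ the number of antichains of size $i$ in the diagram of $\st(x_n^2)$ with the top row removed equals $h_i$, and by Theorem~\ref{thm:veronese} one has $h_i=\binom{n}{2i}$. The case $i=0$ is immediate, the empty antichain matching $\binom{n}{0}=1$. Putting this together with the order isomorphism of the previous step yields exactly $\binom{n}{2i}$ antichains of size $i$ in the triangular area for every $0\le i\le n/2$.

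I do not expect a genuine obstacle here; the one place that deserves care is verifying that $(i,j)\mapsto(j-1,i-1)$ maps the deleted diagram \emph{exactly} onto the prescribed triangle, gaining and losing no boundary points, and that it preserves rather than reverses the partial order --- after that the corollary is a direct translation of Theorem~\ref{thm:veronese}.
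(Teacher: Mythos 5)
Your proof is correct and follows essentially the same route as the paper: identify the triangle with the diagram of $\st(x_n^2)$ with the top row deleted (the paper phrases the coordinate change as shifting the diagram one step north and one step west), then combine Lemma \ref{lem:antichains} with the Hilbert series of the second Veronese from Theorem \ref{thm:veronese}. Your explicit verification that the map is an order isomorphism, and the separate treatment of $i=0$, are just slightly more detailed versions of what the paper leaves implicit.
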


\begin{proof}
Consider the picture for $\st(x_n^2)$ with the top row deleted. If we move this one step to the 
north and one step to the west we get the same number of antichains of each size. The
Hilbert series of $K[\st(x_n^2)]$ is $\sum_{0\le i\le n/2}\binom{n}{2i}t^i/(1-t)^n$, \cite{br-we}.
Then use Lemma \ref{lem:antichains}.
\end{proof}

We will finish by computing the Hilbert series of two more classes of Gorenstein rings from Theorem \ref{thm:main}. 

\begin{thm}
	The ring $K[\st(x_2x_{2k},x_{2k-1}^2)]$ is Gorenstein with Hilbert series
	\[
	\frac{\displaystyle \sum_{i=0}^{k}\left(\binom{2k-1}{2i}+\binom{2k-2}{2(i-1)}\right)t^i}{(1-t)^{2k}}.
	\]
\end{thm}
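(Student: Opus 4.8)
The plan is to compute the $h$-vector of $K[\st(x_2x_{2k}, x_{2k-1}^2)]$ by counting antichains, following the same strategy used for the Veronese ring $K[\st(x_n^2)]$ and for Lemma~\ref{lem:V1_square}. First I would identify the diagram: since $x_{2k-1}^2 \in W$ but $x_{2k}^2 \notin W$, the set $\st(x_2x_{2k}, x_{2k-1}^2)$ contains all of $V_{2k}$ together with the triangular block $\st(x_{2k-1}^2)$, but excludes precisely the single box $x_1x_{2k}$ on the diagonal-end and the boxes $x_{2k}^2$; more carefully, $W = \st(x_{2k-1}^2) \cup D = \st(x_{2k-1}^2) \cup \{x_jx_{2k} : 2\le j\}$. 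So $W$ differs from $\st(x_{2k}^2)$ exactly in the last column, where $x_{2k}^2$ is missing. The dimension is $2k$ by Lemma~\ref{lem:necessary} (the ring is Gorenstein by Theorem~\ref{thm:main}, taking $m_1 = x_{2k-1}^2$, which satisfies condition (1) with $i=j=2k-1>k+1$). With the top row deleted, the diagram is a staircase region, and I would use Lemma~\ref{lem:antichains} to equate $h_i$ with the number of size-$i$ antichains in that region under the product order.

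The key computational step is to split the antichain count according to whether or not an antichain uses a point in the last column (column $2k$, i.e.\ boxes $x_jx_{2k}$ with $j\ge 2$, after deleting the top row this is $j\ge 3$). An antichain avoiding the last column lives in the diagram of $\st(x_{2k-1}^2)$ minus its top row, which after the shift of the Corollary to Theorem~\ref{thm:veronese} is the triangular area giving $\binom{2k-1}{2i}$ antichains of size $i$ — this produces the first summand $\binom{2k-1}{2i}$. An antichain using the last column uses exactly one point there (they are totally ordered within a column), say the lowest-index such point; the remaining $i-1$ points of the antichain must be incomparable to it and hence lie strictly above and to the left, inside a smaller triangular region. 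Identifying that region's size and invoking the same Veronese-type count should yield $\binom{2k-2}{2(i-1)}$. Summing the two contributions gives the stated numerator, and Gorensteinness has already been established, so the symmetry is automatic (though one could also check $\binom{2k-1}{2i}+\binom{2k-2}{2(i-1)}$ is palindromic directly using $\binom{2k-1}{2i}=\binom{2k-1}{2k-1-2i}$ and a Pascal identity).

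The main obstacle I anticipate is getting the bookkeeping exactly right for the "antichain uses the last column" case: one must pin down precisely which sub-triangle the other $i-1$ points are confined to once the lowest last-column point is fixed, and confirm that summing over the possible positions of that lowest point (together with the shift trick) collapses to the single binomial $\binom{2k-2}{2(i-1)}$ rather than a more complicated sum. A clean way to handle this is to observe that requiring an antichain to meet the last column is equivalent, after deleting the last column and the bottom row of what remains incomparable, to counting all antichains in a triangular region of the size that $\st(x_{2k-2}^2)$ (top row deleted, then shifted) would give — i.e.\ reduce to the $n = 2k-1$ instance of the Corollary to Theorem~\ref{thm:veronese}. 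I would double-check the index shift $i \mapsto i-1$ and the edge cases $i=0$ (only the empty antichain, contributing $\binom{2k-1}{0}=1$ from the first term, $0$ from the second) and $i=k$ (where $\binom{2k-1}{2k}=0$, so $h_k = \binom{2k-2}{2k-2}=1$, consistent with the Gorenstein condition $h_k=1$).
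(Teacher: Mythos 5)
Your overall strategy (count antichains via Lemma \ref{lem:antichains}, split on whether the antichain meets the last column, and reduce both pieces to the Veronese counts of Theorem \ref{thm:veronese}) is viable, but your description of the diagram is wrong, and the error lands exactly on the step you flagged as delicate. The strongly stable closure only \emph{lowers} indices, so $\st(x_2x_{2k},x_{2k-1}^2)$ is the full triangle $\st(x_{2k-1}^2)$ together with only the two boxes $x_1x_{2k}$ and $x_2x_{2k}$ in column $2k$; it is \emph{not} $\st(x_{2k}^2)\setminus\{x_{2k}^2\}$ (that set is $\st(x_{2k-1}x_{2k})$, a different ring — for $k=2$ it has $h$-vector $(1,5,1)$, whereas $\st(x_2x_4,x_3^2)$ has $(1,4,1)$, matching the stated formula). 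With your diagram the last column (top row deleted) would contain $2k-2$ boxes, an antichain through box $(j,2k)$ would leave a triangle of side $2k-1-j$ for the remaining points, and the resulting sum $\sum_{j}\binom{2k-j}{2(i-1)}$ does not collapse to $\binom{2k-2}{2(i-1)}$ (already at $i=1$ it gives $2k-2$ instead of $1$). With the correct diagram the step you were worried about becomes trivial: after deleting the top row the last column contains the single box $(2,2k)$, the points incomparable to it are exactly $\{(a,b): 3\le a\le b\le 2k-1\}$, which is order-isomorphic to the diagram of $\st(x_{2k-2}^2)$ with its top row deleted, and Theorem \ref{thm:veronese} gives $\binom{2k-2}{2(i-1)}$ at once. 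So the gap is concrete: the region confining the remaining $i-1$ points is misidentified, and the claimed collapse is false as set up.

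Once the diagram is corrected, your argument goes through and is essentially dual to the paper's: the paper splits the maximal NE-paths according to whether they end with a single E-step or with EN, identifying the two families with maximal paths in $\st(x_{2k-1}^2)$ and in $\st(x_{2k-2}^2)$ respectively, and reads off the $h$-vector from Theorem \ref{thm:veronese}; your split of antichains by whether they use the box $(2,2k)$ is the antichain-side counterpart of that path decomposition. Also note two smaller points: the dimension $2k$ is read off from the length of the first row (not from Lemma \ref{lem:necessary}, which goes the other way), and your appeal to Theorem \ref{thm:main} with $m_1=x_{2k-1}^2$ needs $2k-1>k+1$, i.e.\ $k\ge 3$; for $k=2$ the set is just $V_4$. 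Alternatively, Gorensteinness follows directly from the palindromicity of the computed numerator together with Stanley's theorem, which is how the paper's phrasing implicitly handles it.
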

\begin{proof}
	All maximal NE-paths in the diagram of $\st(x_2x_{2k},x_{2k-1}^2)$ end with a step E or steps EN. The paths ending with an E can be seen as paths in the diagram of $\st(x_{2k-1}^2)$ followed by an E.   The paths ending with EN can be seen as paths in the diagram of $\st(x_{2k-2}^2)$ followed by  EN. The $h$-vector now follows from Theorem \ref{thm:veronese}.
\end{proof}

The strongly stable sets in the next theorem are precisely those obtained from $V_{2k}$ by adding one monomial. 
\begin{thm}
	The ring $K[V_{2k} \cup \st(x_ax_{2k+3-a})]$, with $3 \le a \le k+1$, is Gorenstein with Hilbert series
	\[
	\frac{\displaystyle \sum_{i=0}^k \binom{k}{i}^2t^i + \frac{1}{a-2}\left( \sum_{j=0}^{k-a+1} \binom{k-a+1}{j}^2t^j \right)\left(\sum_{m=0}^{a-2}\binom{a-2}{m}\binom{a-2}{m-1}t^m   \right)}{(1-t)^{2k}}.
	\]
\end{thm}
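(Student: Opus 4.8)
The plan is to compute the $h$-vector via antichains (Lemma~\ref{lem:antichains}) after recognising the relevant diagram as the $V_{2k}$-triangle with a single extra box attached. First I would note that every element $x_ix_j$ of $\st(x_ax_{2k+3-a})$ other than $x_ax_{2k+3-a}$ itself satisfies $i\le a$, $j\le 2k+3-a$ with $(i,j)\ne(a,2k+3-a)$, hence $i+j\le 2k+2$; since $V_{2k}=\{x_ix_j:i\le j,\ i+j\le 2k+2\}$, such an element already lies in $V_{2k}$. Thus $W=V_{2k}\cup\{x_ax_{2k+3-a}\}$, which is the situation announced just before the theorem. As $m_1=x_ax_{2k+3-a}$ satisfies $a\le k+1<2k+3-a$, condition (1) of Theorem~\ref{thm:main} holds and (2) is vacuous, so $K[W]$ is Gorenstein of dimension $2k$. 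Writing $p=(a,2k+3-a)$ for the new box, Lemma~\ref{lem:antichains} says $h_\ell$ is the number of antichains of size $\ell$ in $\Delta:=(\text{diagram of }W)\setminus(\text{top row})=T\cup\{p\}$, where $T$ is the diagram of $V_{2k}$ with its top row deleted, the $k^2$-point shifted Ferrers triangle of Theorem~\ref{thm:V1}.

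Next I would split the antichains of $\Delta$ by whether they contain $p$. Those that do not are exactly the antichains of $T$, of which there are $\binom{k}{\ell}^2$ of size $\ell$ by the computation in Theorem~\ref{thm:V1}. An antichain of size $\ell$ containing $p$ is $\{p\}$ together with an antichain of size $\ell-1$ of the induced subposet $T_p=\{q\in T:q\text{ incomparable to }p\}$, so
\[
\sum_\ell h_\ell t^\ell=\sum_\ell\binom{k}{\ell}^2t^\ell+t\sum_m c_m t^m ,
\]
where $c_m$ counts antichains of size $m$ in $T_p$. It remains to understand $T_p$. Every $q=(i,j)\in T$ has $j\le 2k+2-i<2k+3-a$, so $q\not\ge p$; hence $q$ is incomparable to $p$ iff $q\not\le p$, i.e.\ iff $i\ge a+1$ or $j\ge 2k+4-a$. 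This gives $T_p=A\sqcup B$ with $A=\{q\in T:i\ge a+1\}$ and $B=\{q\in T:j\ge 2k+4-a\}$ (in $B$ the inequality $j\le 2k+2-i$ forces $i\le a-2$, so $A$ and $B$ are disjoint).

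The key step is to identify $A$ and $B$ as posets and multiply their antichain generating functions. For $\alpha\in A$ and $\beta\in B$ one has $i_\alpha\ge a+1>a-2\ge i_\beta$ and $j_\alpha\le 2k+1-a<2k+4-a\le j_\beta$, so $\alpha$ and $\beta$ are incomparable; thus $T_p$ is the disjoint union of the posets $A$ and $B$ and $\sum_m c_mt^m$ is the product of their antichain generating functions. The subposet $A$ consists of rows $a+1,\dots,k+1$ of $T$, of lengths $1,3,\dots,2(k-a)+1$, which is precisely the diagram of $V_{2(k-a+1)}$ with its top row deleted, so it has $\binom{k-a+1}{j}^2$ antichains of size $j$, again by Theorem~\ref{thm:V1}. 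The subposet $B$ consists of rows $2,\dots,a-2$ of $T$, of lengths $a-3,a-4,\dots,1$, i.e.\ the staircase Young poset, equivalently the poset of positive roots of type $A_{a-3}$; it is classical (the antichains being the nonnesting partitions counted by Narayana numbers, cf.\ \cite{ste},\cite{st1}) that it has $N(a-2,m+1)=\frac1{a-2}\binom{a-2}{m+1}\binom{a-2}{m}$ antichains of size $m$. Hence $\sum_m c_mt^m=\bigl(\sum_j\binom{k-a+1}{j}^2t^j\bigr)\bigl(\sum_m N(a-2,m+1)t^m\bigr)$, and since $t\sum_m N(a-2,m+1)t^m=\sum_m N(a-2,m)t^m=\frac1{a-2}\sum_m\binom{a-2}{m}\binom{a-2}{m-1}t^m$ (because $N(a-2,0)=0$), substituting into the displayed identity and dividing by $(1-t)^{2k}$ gives exactly the asserted Hilbert series.

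I expect the main obstacle to be the bookkeeping around the subposet $B$: one must recognise it as a type-$A$ staircase and invoke the Narayana enumeration of its antichains with the correct index shift (size-$m$ antichains $\leftrightarrow$ $N(a-2,m+1)$), which is exactly what absorbs the leftover factor $t$ coming from adjoining $p$ and removes the spurious $t$ from the final answer. It is also worth checking the degenerate cases: $a=3$ gives $B=\emptyset$, so the correction term becomes $t\sum_j\binom{k-2}{j}^2t^j$; $a=k+1$ gives $A=\emptyset$, so the correction term becomes $\sum_m N(k-1,m)t^m$; both agree with the stated formula. Everything else is routine counting of row lengths in the two triangular pieces.
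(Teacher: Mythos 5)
Your proof is correct and is essentially the paper's own argument transposed through the path--antichain dictionary of Lemma~\ref{lem:antichains}: splitting the antichains by whether they contain the new box $p$ corresponds exactly to the paper's splitting of maximal NE-paths by whether they pass through $x_ax_{2k+3-a}$, with your subposets $A$ and $B$ playing the roles of the prefix path in $V_{2(k-a+1)}$ and the Narayana-counted suffix. The one delicate point---the index shift making size-$m$ antichains of the staircase $B$ correspond to $N(a-2,m+1)$---is handled correctly and matches the paper's convention that the suffix has at least one forced N-part, so the extra factor of $t$ is absorbed as claimed.
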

\begin{proof}
	Let $W=V_{2k} \cup \st(x_ax_{2k+3-a})$. As mentioned in the paragraph before the theorem $W=V_{2k} \cup \{x_ax_{2k+3-a}\}$. A maximal NE-path in $W$ that goes outside of $V_{2k}$ must contain the EN steps  $x_ax_{2k+2+a} - x_ax_{2k+3-a} - x_{a-1}x_{2k+3-a}$. Hence an NE-path that goes through $x_ax_{2k+3-a}$ consists of a path $P$ from some $x_b^2$ to $x_ax_{2k+2+a} $  and a path $Q$ from $ x_{a-1}x_{2k+3-a}$ to $x_1x_{2k}$. If $P$ has $j$ maximal N-parts and $Q$ has $m$ maximal N-parts then the whole path has $j+m$ maximal N-parts. The path $P$ can be considered as a path in $V_{2(k-a+1)}$. The number of choices for $Q$ with $m$ maximal N-parts is given by the Narayana number $N(a-2,m)$. Applying Theorem \ref{thm:V1} we get the numerator of the Hilbert series of $K[V_{2k} \cup \st(x_ax_{2k+3-a})]$ as 
	\[
	\sum_{i=0}^k \binom{k}{i}^2t^i + \left( \sum_{j=0}^{k-a+1} \binom{k-a+1}{j}^2t^j \right)\left(\sum_{m=0}^{a-2}N(a-2,m)t^m   \right). \qedhere
	\] 
\end{proof}

\bigskip

\subsection*{Acknowledgement}
We would like to thank the anonymous referee for careful reading. 

\newpage

\pagebreak
\appendix 
\section{Gorenstein rings of dimension $2k$, $k \le 5$}
In the below table we list the all Gorenstein rings with no free variable generated by strongly stable sets of quadratic monomials, up to dimension 10. The rings are defined as $A=K[V_{2k} \cup W]$, where $W$ is listed in the table.   
 
\noindent \begin{tabular}{|c|c|c|}
	\hline
$k$&$h$-vector $A$ &$W$\\
\hline
1&(1, 1)&$\emptyset$\\
2&(1, 4, 1)&$\emptyset$\\
2&(1, 5, 1)&${\rm st}(x_3x_4)$\\
2&(1, 6, 1)&${\rm st}(x_4^2)$\\
3&(1, 9, 9, 1)&$\emptyset$\\
3&(1, 10, 10, 1)&${\rm st}(x_4x_5)$\\
3&(1, 10, 10, 1)&${\rm st}(x_3x_6)$\\
3&(1, 12, 12, 1)&${\rm st}(x_4x_6)$\\
3&(1, 11, 11, 1)&${\rm st}(x_5^2)$\\
3&(1, 15, 15, 1)&${\rm st}(x_6^2)$\\
4&(1, 16, 36, 16, 1)&$\emptyset$\\
4&(1, 17, 39, 17, 1)&${\rm st}(x_5x_6)$\\
4&(1, 17, 38, 17, 1)&${\rm st}(x_4x_7)$\\
4&(1, 17, 40, 17, 1)&${\rm st}(x_3x_8)$\\
4&(1, 18, 44, 18, 1)&${\rm st}(x_5x_6, x_3x_8)$\\
4&(1, 19, 43, 19, 1)&${\rm st}(x_5x_7)$\\
4&(1, 19, 44, 19, 1)&${\rm st}(x_4x_8)$\\
4&(1, 18, 42, 18, 1)&${\rm st}(x_6^2)$\\
4&(1, 19, 48, 19, 1)&${\rm st}(x_6^2, x_3x_8)$\\
4&(1, 22, 50, 22, 1)&${\rm st}(x_7^2)$\\
4&(1, 28, 70, 28, 1)&${\rm st}(x_8^2)$\\
& & \\
& & \\
\hline
\end{tabular}
\begin{tabular}{|c|c|c|}
	\hline
	$k$&$h$-vector $A$&$W$\\
	\hline
5&(1, 25, 100, 100, 25, 1)&$\emptyset$\\
5&(1, 26, 106, 106, 26, 1)&${\rm st}(x_6x_7)$\\
5&(1, 26, 104, 104, 26, 1)&${\rm st}(x_5x_8)$\\
5&(1, 26, 105, 105, 26, 1)&${\rm st}(x_4x_9)$\\
5&(1, 26, 109, 109, 26, 1)&${\rm st}(x_3x_{10})$\\
5&(1, 27, 112, 112, 27, 1)&${\rm st}(x_6x_7, x_4x_9)$\\
5&(1, 27, 116, 116, 27, 1)&${\rm st}(x_6x_7, x_3x_{10})$\\
5&(1, 27, 114, 114, 27, 1)&${\rm st}(x_5x_8, x_3x_{10})$\\
5&(1, 28, 114, 114, 28, 1)&${\rm st}(x_6x_8)$\\
5&(1, 29, 126, 126, 29, 1)&${\rm st}(x_6x_8, x_3x_{10})$\\
5&(1, 29, 128, 128, 29, 1)&${\rm st}(x_4x_8)$\\
5&(1, 28, 119, 119, 28, 1)&${\rm st}(x_4x_{10}, x_6x_7)$\\
5&(1, 31, 127, 127, 31, 1)&${\rm st}(x_6x_9)$\\
5&(1, 31, 131, 131, 31, 1)&${\rm st}(x_5x_{10})$\\
5&(1, 35, 135, 135, 35, 1)&${\rm st}(x_6x_{10})$\\
5&(1, 27, 112, 112, 27, 1)&${\rm st}(x_7^2)$\\
5&(1, 28, 119, 119, 28, 1)&${\rm st}(x_7^2, x_4x_9)$\\
5&(1, 28, 123, 123, 28, 1)&${\rm st}(x_7^2, x_3x_{10})$\\
5&(1, 30, 137, 137, 30, 1)&${\rm st}(x_7^2, x_4x_{10})$\\
5&(1, 31, 128, 128, 31, 1)&${\rm st}(x_8^2)$\\
5&(1, 32, 143, 143, 32, 1)&${\rm st}(x_8^2, x_3x_{10})$\\
5&(1, 37, 154, 154, 37, 1)&${\rm st}(x_9^2)$\\
5&(1, 45, 210, 210, 45, 1)&${\rm st}(x_{10}^2)$\\
\hline
\end{tabular}

\end{document}